\newtheorem{theorem}{Theorem}
\newtheorem{lemma}{Lemma}
\newtheorem{proposition}{Proposition}
\newtheorem{assumption}{Assumption}
\newtheorem{definition}{Definition}
\newtheorem{corollary}{Corollary}
\theoremstyle{remark}
\newtheorem{example}{Example}
\def\grad{\nabla}
\def\ba{\mathbf{a}}
\def\bb{\mathbf{b}}
\def\bd{\mathbf{d}}
\def\bn{\mathbf{n}}
\def\bs{\mathbf{s}}
\def\bw{\mathbf{w}}
\def\bx{\mathbf{x}}  %
\def\by{\mathbf{y}}
\def\bz{\mathbf{z}}
\def\bA{\mathbf{A}}
\def\bB{\mathbf{B}}
\def\bD{\mathbf{D}}
\def\bX{\mathbf{X}}
\def\cD{\mathcal{D}}
\def\cH{\mathcal{H}}
\def\cN{\mathcal{N}}
\def\cO{\mathcal{O}}
\def\cR{\mathcal{R}}
\def\cX{\mathcal{X}}
\def\cZ{\mathcal{Z}}
\def\smskip{\smallskip}
\def\texitem#1{\par\smskip\noindent\hangindent 25pt
               \hbox to 25pt {\hss #1 ~}\ignorespaces}
\def\abs#1{\left|#1\right|}
\def\norm#1{\left\|#1\right\|}
\def\fprod#1{\left\langle#1\right\rangle}
\newcommand{\BEAS}{\begin{eqnarray*}}
\newcommand{\EEAS}{\end{eqnarray*}}
\newcommand{\BEA}{\begin{eqnarray}}
\newcommand{\EEA}{\end{eqnarray}}
\newcommand{\BEQ}{\begin{eqnarray}}
\newcommand{\EEQ}{\end{eqnarray}}
\newcommand{\BIT}{\begin{itemize}}
\newcommand{\EIT}{\end{itemize}}
\newcommand{\BNUM}{\begin{enumerate}}
\newcommand{\ENUM}{\end{enumerate}}
\newcommand{\BA}{\begin{array}}
\newcommand{\EA}{\end{array}}
\newcommand{\reals}{\mathbb{R}}
\def\Pr{\mathbb{P}}
\newcommand{\dist}{\mathop{\rm dist{}}}
\DeclareMathOperator*{\argmin}{\arg\!\min}
\def\blue#1{\textcolor{blue}{#1}}
\newif\ifpagenumbering
\newsavebox{\theorembox}
\newsavebox{\lemmabox}
\newsavebox{\defnbox}
\newsavebox{\corollarybox}
\newsavebox{\remarkbox}
\newsavebox{\assbox}
\savebox{\theorembox}{\noindent\bf Theorem}
\savebox{\lemmabox}{\noindent\bf Lemma}
\savebox{\defnbox}{\noindent\bf Definition}
\savebox{\corollarybox}{\noindent\bf Corollary}
\savebox{\remarkbox}{\noindent\bf Remark}
\newtheorem{remark}{\usebox{\remarkbox}}[section]
\newcommand{\FW}{\mathcal{G}}
\newcommand{\bigO}{\mathcal{O}}
\newcommand{\underf}{\underline{f}}
\def\bbeta{\boldsymbol{\beta}}
\def\blambda{\boldsymbol{\lambda}}
\newcommand{\rj}[1]{\textcolor{blue}{#1}}
\renewcommand{\blue}{}
\renewcommand{\rj}{}
\begin{document}

\runningtitle{A Conditional Gradient-based Method for Simple Bilevel Optimization with Convex Lower-level Problem}

\runningauthor{Ruichen Jiang, Nazanin Abolfazli, Aryan Mokhtari, Erfan Yazdandoost Hamedani}

\twocolumn[

\aistatstitle{A Conditional Gradient-based Method for Simple Bilevel Optimization\\ with Convex Lower-level Problem}

\aistatsauthor{Ruichen Jiang \And \hspace{-2em}Nazanin Abolfazli \And  \hspace{-2em}Aryan Mokhtari \And \hspace{-2em}Erfan Yazdandoost Hamedani}

\aistatsaddress{ UT Austin \And  \hspace{-2em}University of Arizona \And \hspace{-2em}UT Austin \And \hspace{-2em}University of Arizona} ]

\begin{abstract}
In this paper, we study a class of bilevel optimization problems, also known as simple bilevel optimization, where we minimize a smooth objective function over the optimal solution set of another convex constrained optimization problem. Several iterative methods have been developed for tackling this class of problems. Alas, their convergence guarantees are either asymptotic for the upper-level objective, or the convergence rates are slow and sub-optimal. To address this issue,
in this paper, we introduce a novel bilevel optimization method that locally approximates the solution set of the lower-level problem via a cutting plane and then runs a conditional gradient update to decrease the upper-level objective.  When the upper-level objective is convex, we show that our method requires ${\mathcal{O}}(\max\{1/\epsilon_f,1/\epsilon_g\})$ iterations to find a solution that is $\epsilon_f$-optimal for the upper-level objective and  $\epsilon_g$-optimal for the lower-level objective. Moreover, when the upper-level objective is non-convex, our method requires ${\mathcal{O}}(\max\{1/\epsilon_f^2,1/(\epsilon_f\epsilon_g)\})$ iterations to find an $(\epsilon_f,\epsilon_g)$-optimal solution.
We also prove stronger convergence guarantees under the H\"olderian error bound assumption on the lower-level problem.
To the best of our knowledge, our method achieves the best-known iteration complexity for the considered class of bilevel problems. %
\end{abstract}

\section{INTRODUCTION}\label{sec:intro}
Bilevel optimization is a form of optimization where one problem is embedded within another. It captures
a hierarchical structure, where an \emph{upper-level} function is minimized over the solution set of a \emph{lower-level} problem. 
This class of problems has attracted great attention due to their applications in
hyper-parameter optimization \citep{franceschi2018bilevel,shaban2019truncated}, meta-learning \citep{rajeswaran2019meta,bertinetto2018metalearning}, and reinforcement learning \citep{hong2020two}.
In this paper, we focus on a specific form of bilevel optimization formally defined as 
\begin{equation}\label{eq:bi-simp}
    \min_{\bx\in \reals^d}~f(\bx)\quad \hbox{s.t.}\quad  \bx\in\argmin_{\bz\in \cZ}~g(\bz),
\end{equation} 
where $\cZ$ is a compact convex set and %
$f,g:\reals^d\to \reals$ are continuously differentiable functions on an open set containing $\cZ$. We assume that $g$ is convex but not necessarily strongly convex, and hence the lower-level problem in \eqref{eq:bi-simp} could have multiple optimal solutions.  
We remark that problem~\eqref{eq:bi-simp} is often referred to as the ``simple bilevel problem'' in the literature \citep{dempe2010optimality,dutta2020algorithms,shehu2021inertial} to differentiate it from the more general settings where the lower-level problem is parameterized by some upper-level variables. This class of bilevel problems appears in several settings as discussed in Section~\ref{sec:pre}. The key challenge to solve problem~\eqref{eq:bi-simp} stems from the fact that its feasible set---the solution set of the lower-level problem---does not admit a simple characterization and is not explicitly given. This rules out the possibility of directly applying projection-based methods as well as the conditional gradient (CG) type methods, since projection onto or minimizing a linear objective over the feasible set is intractable.

\begin{table*}[t!]\scriptsize
     \renewcommand{\arraystretch}{1.0}
    \centering
    \caption{Summary of bilevel optimization algorithms. The abbreviations ``SC'', ``C'', and ``non-C'' stand for ``strongly convex'', ``convex'', and ``non-convex'', respectively.}\label{tab:bilevel}
     \resizebox{\textwidth}{!}{%
        \begin{tabular}{ccccccc}
            \toprule
			\multirow{2}{*}{References}			    & Upper level 		                                            & \multicolumn{2}{c}{Lower level}                                       & \multicolumn{2}{c}{Convergence}					    & \multirow{2}{*}{Oracle needed}                 \\ \cmidrule(r){2-6}
                                            	    & Objective $f$                                                          & Objective $g$            & Feasible set $\cZ$                          &	Upper level				&   Lower level                    &                                                                                               \\ \midrule
            MNG \citep{beck2014first}                                        & SC, differentiable                                  & C, smooth                      & Closed              &  Asymptotic              & $\mathcal{O}(1/\epsilon^2)$            &    projection                                                                   \\ \midrule
            BiG-SAM \citep{sabach2017first}                                        & SC, smooth                                   & C, composite        & Closed              &  Asymptotic                                     & $\mathcal{O}(1/\epsilon)$                    &           projection                                            \\ \midrule
           Tseng's method \citep{malitsky2017chambolle}                                         & C, composite                                       & C, composite         & Closed              &  Asymptotic                            & $o(1/\epsilon)$             &      projection                               \\ \midrule
           a-IRG \citep{Kaushik2021}  & C, Lipschitz & C, Lipschitz & Closed & \multicolumn{2}{c}{ ${\mathcal{O}}(\max\{1/\epsilon_f^4,1/\epsilon_g^{4}\})$} & projection \\ \midrule
           \textbf{This paper, Theorem 1}                                         & C, smooth                                                   & C, smooth                      & Compact              & \multicolumn{2}{c}{${\mathcal{O}}(\max\{1/\epsilon_f,1/\epsilon_g\})$}            &   linear solver                                                         \\ \midrule
		       \textbf{This paper, Theorem 2}                                         & Non-C, smooth                                                   & C, smooth                      & Compact              & \multicolumn{2}{c}{${\mathcal{O}}(\max\{1/\epsilon_f^2, 1/(\epsilon_f \epsilon_g) \})$}           &   linear solver                                                          \\ \bottomrule
        \end{tabular}%
    }
\end{table*}

A possible scheme in this case is reformulating problem~\eqref{eq:bi-simp} as a constrained optimization problem with functional constraints and applying primal-dual methods. Specifically, problem~\eqref{eq:bi-simp} can be written as 
\begin{equation}\label{eq:constrained_formulation}
  \min_{\bx\in \reals^d}~f(\bx)\quad \hbox{s.t.}\quad  \bx\in \mathcal{Z},\,g(\bx) \leq g^*,
\end{equation}
where $g^*$ is the optimal value of the lower-level problem. However, a critical issue is that problem~\eqref{eq:constrained_formulation} does not satisfy strict feasibility and hence the Slater's condition fails, 
which is required for most primal-dual methods. %
Even relaxing the constraint ($g(\bx) \leq g^*+\epsilon$) to ensure strict feasibility  would inevitably lead to numerical issues.
In fact, as $\epsilon$ approaches zero and the problem becomes nearly degenerate, the dual optimal variable may tend to infinity, which slows down the convergence and leads to numerical instability \citep{bonnans2013perturbation}
(A detailed discussion about this instability issue is provided in Appendix~\ref{appen:primal-dual}). Therefore, problem~\eqref{eq:bi-simp} cannot be simply treated as a classic constrained optimization problem and calls for new theories and algorithms tailored to its hierarchical structure.

 Another approach to solving problem~\eqref{eq:bi-simp} is the Tikhonov-type regularization~\citep{Tikhonov1977}, where 
the objective functions of both levels are combined using a regularization parameter $\sigma\!>\!0$
to form a single-level problem. It is known that as $\sigma\! \to\! 0$, any cluster point of the solutions of the regularized single-level problem is a solution to the bilevel problem in~\eqref{eq:bi-simp}. In fact, under certain assumptions as shown in \citep{Friedlander2008,Dempe2021}, the solution set of problem \eqref{eq:bi-simp} exactly matches with the regularized problem for sufficiently small $\sigma$. Alas, checking such conditions and finding the threshold are often difficult in practice. To avoid this issue, \cite{Cabot2005} and \cite{solodov2007explicit} proposed  adjusting the regularization parameter $\sigma$ dynamically and proved an asymptotic convergence guarantee.
Along another line of research, several works~\citep{yamada2001,xu2004viscosity} have studied the more general problem of solving a variational inequality over the fixed-point set of a nonexpansive mapping, to which the problem in~\eqref{eq:bi-simp} is a special case. In particular, the hybrid steepest descent method by~\cite{yamada2001} and the sequential averaging method (SAM) by~\cite{xu2004viscosity} converge asymptotically to the optimal solution when the parameters are properly chosen. However, these results fail to provide any non-asymptotic guarantee for either the upper- or lower-level objectives.

More recently, there has been a surge of interest in establishing non-asymptotic convergence rates for problem~\eqref{eq:bi-simp}. One of the first methods of this kind is the minimal norm gradient (MNG) method proposed by \cite{beck2014first}. When the upper-level function $f$ is strongly-convex and the lower-level function $g$ is convex and smooth, they showed that MNG converges asymptotically to the optimal solution and achieves a complexity bound of $\bigO(1/\epsilon^2)$ in terms of the lower-level objective value. Subsequently, built on the SAM framework,  Bilevel Gradient SAM (BiG-SAM) was proposed by \cite{sabach2017first}  and it was shown to achieve a complexity of $\bigO(1/\epsilon)$ for the lower-level problem; see also \cite{shehu2021inertial} for a related method. \cite{malitsky2017chambolle} studied a version of Tseng's accelerated gradient method that obtains a convergence rate of $o(1/k)$ for the lower-level problem.
However, these prior works only establish convergence rates for the lower-level problem, while the rate for the upper-level objective is missing. 
The only exception is the work by \cite{Kaushik2021}: when $f$ and $g$ are convex and Lipschitz continuous, they showed that an iterative regularization-based method achieves a convergence rate of $\bigO(1/k^{0.5-b})$ for the upper-level objective and a rate of $\bigO(1/k^{b})$ for the lower-level, where $b\in(0,0.5)$ is a user-defined parameter. As stated in Table~\ref{tab:bilevel}, if one sets $b=0.25$ to balance the two rates, then finding a solution that is $\epsilon_f$-optimal  for the upper-level problem and $\epsilon_g$-optimal for the lower-level problem would require a complexity of ${\mathcal{O}}(\max\{1/\epsilon_f^4,1/\epsilon_g^{4}\})$.

\textbf{Contributions.} In this paper, we present a novel conditional gradient-based bilevel optimization (CG-BiO) method with tight non-asymptotic guarantees for both upper- and lower-level problems. At each iteration, our proposed CG-BiO method uses a cutting plane to locally approximate the solution set of the lower-level problem, and then combines it with a CG-type update on the upper-level objective. %
Our theoretical guarantees for CG-BiO are the following: 
\vspace{-1mm}
\begin{itemize}
\vspace{-1mm}
  \item When the upper-level function $f$ is convex, we show that CG-BiO finds $\hat{\bx}$ that satisfies $f(\hat{\bx})-f^* \leq \epsilon_f$ and $g(\hat{\bx})-g^* \leq \epsilon_g$
  within ${\mathcal{O}}(\max\{1/\epsilon_f,1/\epsilon_g\})$ iterations, where $f^*$ is the optimal value of problem~\eqref{eq:bi-simp} and $g^*$ is the optimal value of the lower-level problem. This guarantee matches the best-known results in terms of the lower-level objective and is optimal for bilevel projection-free methods.
  \vspace{-1mm}
  \item When $f$ is non-convex, CG-BiO finds $\hat{\bx}$ that satisfies $\FW(\hat{\bx}) \leq \epsilon_f$ and $g(\hat{\bx})-g^* \leq \epsilon_g$
  within  ${\mathcal{O}}(\max\{1/\epsilon_f^2,1/(\epsilon_f\epsilon_g)\})$ iterations, where $\FW(\hat{\bx})$ is the Frank-Wolfe (FW) gap function (cf. \eqref{eq:FW_gap}). 
  \vspace{-1mm}
  \item With an additional $r$-th-order ($r\geq 1$) H\"olderian error bound assumption on the lower-level problem, CG-BiO finds $\hat{\bx}$ with $|f(\hat{\bx})-f^*|\leq \epsilon_f$ within $\bigO(1/\epsilon_f^r)$ iterations in the convex case, and $\hat{\bx}$ with $|\FW(\hat{\bx})|\leq \epsilon_f$ within $\bigO(1/\epsilon_f^{r+1})$ iterations in the non-convex case.
\end{itemize}

\vspace{-2mm}
It is worth noting that the state-of-the-art methods for solving simple bilevel problems (stated in Table~\ref{tab:bilevel}) require projection onto the set $\mathcal{Z}$ at each iteration. In contrast, as our proposed method is a CG-based method, it requires access to a linear solver instead of projection %
at each iteration, which is suitable for the settings where projection is computationally costly; e.g., when $\mathcal{Z}$ is a polyhedron.

 \textbf{Additional Related Work.} In the general form of bilevel problems, the upper-level function $f$ may also depend on an additional variable $\bw\in \reals^m$ that in turn influences the lower-level problem:
\begin{equation}\label{eq:optimstic_bilevel}
  \min_{\bx\in \reals^d,\bw\in \reals^m}~f(\bx,\bw)\quad \hbox{s.t.}\quad  \bx\in\argmin_{\bz\in \cZ}~g(\bz,\bw).
\end{equation}
Problem~\eqref{eq:optimstic_bilevel} has been studied in depth, and we refer the readers to the extensive survey by \cite{dempe2020bilevel}. Note that for any fixed $\bw$, the above problem boils down to a simple bilevel problem in~\eqref{eq:bi-simp}.
In recent years, 
gradient-based methods for problem~\eqref{eq:optimstic_bilevel} have become increasingly popular  
including implicit differentiation~\citep{domke2012generic,pedregosa2016hyperparameter,gould2016differentiating,ji2021bilevel} and iterative differentiation~\citep{maclaurin2015gradient,franceschi2018bilevel}.
However, most of the existing methods work under the assumption that the lower-level problem is strongly convex in $\bz$ for any $\bw$ and thus has a unique minimum. Note that such an assumption would render the simple bilevel problem in \eqref{eq:bi-simp} trivial, as it amounts to solving the lower-level problem only.
More relevant to our work, some concurrent papers consider the case where the lower-level problem
can have multiple minima \citep{liu2020generic,li2020improved,liu2021value,liu2021towards,sow2022constrained,gao2022value}.
They either reformulate problem~\eqref{eq:optimstic_bilevel} as a constrained optimization problem in the same spirit as \eqref{eq:constrained_formulation}, or build upon existing methods (in particular, BiG-SAM by \citet{sabach2017first}) for solving the simple bilevel problem.
Moreover, as they consider a more general problem than ours, their theoretical results are necessarily weaker, providing only asymptotic convergence guarantees or slower rates. 
In this paper, 
we explore a fundamentally different approach for solving the bilevel problem in \eqref{eq:bi-simp} directly and provide tight non-asymptotic convergence guarantees for our method.

\section{PRELIMINARIES}\label{sec:pre}

In this section, we first discuss 
a few motivating examples for problem~\eqref{eq:bi-simp}, which can be generalized to two broader classes of problems: lexicographic optimization~\citep{gong2021automatic} %
and lifelong learning~\citep{chaudhry2018efficient}. {Additional discussions and examples} are provided in Appendix~\ref{sec:additional_example}. Then, we state the required assumptions and notions of optimality that we use for our theoretical results.

\vspace{-1mm}
\subsection{Motivating Examples}\label{subsec:examples}
\vspace{-1mm}
Several machine learning applications consist of a main objective $g$, such as the training loss, and a secondary objective $f$, such as a regularization term or an auxiliary loss. 
In this case, a natural approach is to fully optimize the main objective and use the secondary objective as a criterion to select one of the optimal solutions. 
This approach is also known as lexicographic optimization~\citep{gong2021automatic} and can be formulated as the simple bilevel problem in~\eqref{eq:bi-simp}.
In Examples~\ref{ex:over_regression} and~\ref{ex:fair_class}, we provide instances of such problems. %

In the paradigm of Lifelong Learning, 
the learner faces a stream of possibly related tasks and  
the central theme is to accumulate the knowledge learned from the past and continually improves it given new tasks. 
We can cast this problem as a simple bilevel problem, where the lower-level loss corresponds to samples from seen tasks, while the upper-level loss captures the error on a new task. The goal is to improve the model using the new task while ensuring that it still performs well over the previous tasks. We illustrate an instance of this class of problems in Example 3.

\begin{example}[Over-parameterized regression]\label{ex:over_regression}
In a constrained regression problem, we aim to find a parameter vector $\bbeta\in \reals^{{d}}$ that minimizes the loss $\ell_{\mathrm{tr}}(\bbeta)$ with respect to the training dataset $\cD_{\mathrm{tr}}$.
We also constrain $\bbeta$ to be in some set $\cZ\subseteq\reals^d$ representing some prior knowledge.
For instance, we have 
{$\cZ=\{\bbeta\mid\|\bbeta\|_1\leq \lambda\}$} for some $\lambda>0$ in a sparse regression problem.
Without an explicit regularization, an over-parameterized regression problem over the training dataset possesses multiple global minima. In fact, while any optimization algorithm can achieve one of these many global minima, not all optimal regression coefficients perform equally. Hence, one can consider a secondary objective, such as the loss over a validation set $\cD_{\mathrm{val}}$, to select one from the minimizers of the training loss. This leads to the following bilevel problem:
\begin{equation}\label{eq:over_regression}
  \begin{aligned}
    & \min_{\bbeta\in \reals^d}~f(\bbeta)\triangleq\ell_{\mathrm{val}}(\bbeta)\\
    &\;\,\hbox{s.t.}\quad  \bbeta\in\argmin_{\bz\in \cZ}~g(\bz)\triangleq\ell_{\mathrm{tr}}(\bz).
   \end{aligned}
\end{equation}
We note that problem~\eqref{eq:over_regression} can also appear as a subproblem in hyperparameter selection problems~\citep{gao2022value}.
In this case, both the upper-level and lower-level objectives are smooth and convex if the loss $\ell$ is smooth and convex. 
\end{example}

\begin{example}[Fair classification]\label{ex:fair_class}
  {
    In a binary classification problem, we aim to find a mapping from the feature vectors $\bx_i$ to the target labels $y_i$.
    Due to the bias in the dataset, standard training procedures could lead to a model that discriminates against certain social groups. To alleviate this issue, we can use a fairness metric as a secondary objective to promote fairness in the decision of the model.  
    One common criterion is the $p$\%-rule: given a sensitive attribute $v$ such as race or sex, we require that for any $a$ and $b$,
    \begin{equation*}
        \min \left( \frac{\Pr(\hat{y}=1 \,\vert\,\bx,v=a)}{\Pr(\hat{y}=1 \,\vert\,\bx,v=b)}, \frac{\Pr(\hat{y}=1 \,\vert\,\bx,v=b)}{\Pr(\hat{y}=1 \,\vert\,\bx,v=a)}\right) \geq \frac{p}{100},
    \end{equation*}
    where $\hat{y}$ is the prediction of the model. 
    However, this objective is hard to optimize and hence we use the covariance as a surrogate loss as suggested in~\citep{zafar2017fairness,gong2021automatic}.
    Let $h(\bx;\bbeta)$ be the output of the model parameterized by $\bbeta$, and consider the following problem:
    \begin{equation*}
      \min_{\bbeta\in \reals^d}~(\mathrm{cov}(h(\bx;\bbeta),v))^2\quad \hbox{s.t.}\quad  \bbeta\in\argmin_{\bz\in \cZ}~\ell_{\mathrm{tr}}(\bz).
  \end{equation*}
  Specifically, we aim to minimize the correlation between our prediction model and the sensitive feature $v$ without sacrificing its performance over the training set. 
   }
\end{example}

\begin{example}[Dictionary learning]\label{ex:dict}
    The goal of dictionary learning is to learn a concise representation of the input data from a massive dataset. 
    Let $\bA\!=\!\{\ba_1,\dots,\ba_n\}$ denote a dataset of $n$ points with $\ba_i\!\in\! \reals^m$ for any $i\!\in\!\cN\triangleq\! \{1,\hdots,n\}$. We aim to find a dictionary $\bD=[\bd_1,\dots,\bd_p]\in \reals^{m\times p}$ such that each data point $\ba_i$ can be well approximated by a linear combination of a few basis vectors in $\bD$.  
  A common approach is to formulate this as the following non-convex optimization problem \citep{kreutzdelgado2003dictionary,yaghoobi2009dictionary,rakotomamonjy2013direct,bao2016dictionary}: 
    \begin{equation}\label{eq:dict_learning_low}
    \begin{aligned}
      &\min_{\bD\in \reals^{m\times p}}\min_{\bX\in \reals^{p\times n}}~\frac{1}{2n}\sum_{i\in\cN} \|\ba_i-\bD \bx_i\|_2^2\\ & \hbox{s.t.}\quad \|\bd_j\|_2\leq 1, j=1,\dots,p;\;\|\bx_i\|_1\leq \delta,i\in\cN.
    \end{aligned}
    \end{equation}
   Note that we normalize the basis vectors to have bounded $\ell_2$-norm and impose $\ell_1$-norm constraints to encourage sparsity in $\{\bx_i\}_{i=1}^n$. Further, we refer to $\bX = [\bx_1,\dots,\bx_n]\in \reals^{p\times n}$ as the coefficient matrix.

    In real applications, the data points typically arrive sequentially and the underlying representation may be gradually evolving. Thus, it is desirable to update our dictionary in a continuous manner. Suppose that we already have learned a dictionary $\hat{\bD}\in \reals^{m \times p}$ and the corresponding coefficient matrix $\hat{\bX}\in \reals^{p\times n}$ 
    for the dataset $\bA$.   
    When a new dataset $\bA' =\{\ba'_1,\dots,\ba'_{n'}\}$ arrives, 
    we hope to expand our dictionary by learning new basis vectors from $\bA'$ while retaining the learned information in $\hat\bD$. To achieve so, we aim to find the dictionary $\tilde{\bD}\in \reals^{m\times q}$ ($q>p$) and the coefficient matrix $\tilde{\bX}\in \reals^{q\times n'}$ for the new dataset $\bA'$, and at the same time enforce $\tilde{\bD}$ to perform well on the old dataset $\bA$ together with the learned coefficient matrix $\hat{\bX}$.
    This leads to the following bilevel problem:
    \vspace{-0.5mm}
    \begin{equation}\label{eq:dict_learning}
      \begin{aligned}
        &\min_{\tilde{\bD}\in \reals^{m\times q}}\min_{\tilde{\bX}\in \reals^{q\times n'}}  f(\tilde{\bD},\tilde{\bX}) \\ &\text{s.t.} \quad \|\tilde{\bx}_k\|_1\leq \delta,k=1,\dots,n';\;\tilde{\bD}\in  \argmin_{\|\tilde{\bd}_j\|_2\leq 1}~g(\tilde{\bD}),
        \end{aligned}
    \end{equation}
       \vspace{-0.5mm}
   where the objective $f(\tilde{\bD},\tilde{\bX})\triangleq\frac{1}{2n'}\sum_{k=1}^{n'} \| \ba_k'\!-\! \tilde{\bD} \tilde{\bx}_k\|^2_2$ is the average reconstruction error on the new dataset $\bA'$, the lower-level objective $g(\tilde{\bD})\triangleq\frac{1}{2n}\sum_{i=1}^n \|\ba_i-\tilde{\bD} \hat{\bx}_i\|_2^2$ is the error on the old dataset $\bA$, and with a slight abuse of notation we let $\hat{\bx}_i$ denote the extended vector in $\reals^q$ by appending zeros at the end. 
  Note that in problem~\eqref{eq:dict_learning}, the upper-level objective is non-convex while the lower-level objective is convex with multiple minima. 
 \end{example}

 \vspace{-1mm}
\subsection{Assumptions and Definitions}\label{subsec:assumption}
 \vspace{-1mm}
 
We focus on the case where the lower-level function $g$ is smooth and convex, while the upper-level function $f$ is smooth but not necessarily convex. Formally, we make the following assumptions.
\begin{assumption}\label{assum:smooth}
   Let $\|\cdot\|$ be an arbitrary norm on $\reals^d$ and $\|\cdot\|_*$ be its dual norm.  We assume 
  \begin{enumerate}[(i)]
  \vspace{-1mm}
    \item $\cZ\subset\reals^d$ is convex and compact with diameter $D$, i.e., $\|\bx-\by\| \leq D$ for all $\bx,\by \in \cZ$. 
    \item $g$ is convex and continuously differentiable on an open set containing $\cZ$, and its gradient is Lipschitz with constant $L_g$, i.e.,$
      \|\grad g(\bx)-\grad g(\by)\|_* \leq L_g\|\bx-\by\|
  $ for all $\bx,\by \in \cZ$.
    \item $f$ is continuously differentiable and its gradient is Lipschitz with constant $L_f$. 
  \end{enumerate}
  \end{assumption}
  
  \vspace{2mm}
  \begin{remark}
    Instead of the Lipschitz gradient assumptions above, we may  assume that $f$ and $g$ have bounded \emph{curvature constants}. Such an assumption is common in the analysis of the CG method and has the advantage of being affine-invariant, e.g., see \citep{jaggi2013revisiting,lacoste2016convergence}.
  \end{remark}
  
  Throughout the paper, we use $g^* \triangleq \min_{\bz\in \cZ} g(\bz)$ and $\cX^*_g \triangleq \argmin_{\bz\in \cZ} g(\bz)$ to denote the optimal value and the optimal solution set of the lower-level problem, respectively. Note that by Assumption~\ref{assum:smooth}, the set $\cX^*_g$ is nonempty, compact and convex, but in general not a singleton as $g$ could have multiple minima on $\cZ$.
  Moreover, we use $f^*$ to denote the optimal value and $\bx^*$ to denote an optimal solution of problem~\eqref{eq:bi-simp}, which are guaranteed to exist as $f$ is continuous and $\cX^*_g$ is compact.

For generality, we allow different target accuracies $\epsilon_f$ and $\epsilon_g$ for the upper-level and lower-level problems, respectively, and define an $(\epsilon_f, \epsilon_g)$-optimal solution as follows.
\begin{definition}[$(\epsilon_f,\epsilon_g)$-optimal solution]\label{def:optimal}
  When $f$ is convex, a point $\hat{\bx}\in \mathcal{Z}$ is $(\epsilon_f, \epsilon_g)$-optimal for problem~\eqref{eq:bi-simp} if  
  \begin{equation*}
    f(\hat{\bx}) - f^* \leq \epsilon_f\quad \text{and}\quad g(\hat{\bx}) - g^* \leq \epsilon_g.
  \end{equation*}
  When $f$ is non-convex, $\hat{\bx}\in \mathcal{Z}$ is $(\epsilon_f, \epsilon_g)$-optimal if  
  \begin{equation*}
    \FW (\hat{\bx}) \leq \epsilon_f\quad \text{and}\quad g(\hat{\bx}) - g^* \leq \epsilon_g,
  \end{equation*}
  where $\FW(\hat{\bx})$ is the FW gap \citep{jaggi2013revisiting,lacoste2016convergence} defined by
  \begin{equation}\label{eq:FW_gap}
    \FW(\hat{\bx})\triangleq \max_{\bs\in \cX^*_g}\{\fprod{\grad f(\hat{\bx}),\hat{\bx}-\bs}\}.
  \end{equation}
  \end{definition} 

\section{PROPOSED ALGORITHM}

Before stating our proposed method, we start by the standard CG method~\rj{\citep{Frank1956,levitin1966constrained}} for solving problem~\eqref{eq:bi-simp}. Recall that $\cX_g^*$ denotes the solution set of the lower-level problem.  
If we assume $\bx_0\in \cX_g^*$, then the CG update at iteration $k$ is given by 
\vspace{-1mm}
\begin{equation*}
\vspace{-1mm}
  \bx_{k+1} = (1-\gamma_k)\bx_k+\gamma_k \bs_k,
\end{equation*}
where 
\vspace{-1mm}
\begin{equation}\label{eq:standard_CG}
\vspace{-1mm}
    \bs_k = \argmin_{\bs\in \cX_g^*}~ \fprod{\grad f(\bx_k),\bs},
\end{equation}
and $\gamma_k\in[0,1]$ is the stepsize. However, the main challenge here is that the solution set $\cX_g^*$ for the lower-level problem is not explicitly given,
and hence the linear minimization required in \eqref{eq:standard_CG} is computationally intractable. Moreover, the standard CG method needs to be initialized with a feasible point. In this case, $\bx_0$ has to be an optimal solution of the lower-level problem, which is hard to guarantee in general---in finite number of iterations one may not be able to find an \textit{exact} optimal solution for the lower-level problem. Similar issues also hold if we try to use projection-based methods such as projected gradient descent to solve problem~\eqref{eq:constrained_formulation}. %

\begin{algorithm}[t!]
	\caption{CG-based bilevel optimization (CG-BiO)}\label{alg:bilevel}
	\begin{algorithmic}[1]
	\STATE \textbf{Input}: Target accuracy  $\epsilon_f,\epsilon_g>0$, stepsizes $\{\gamma_k\}_k$
	\STATE \textbf{Initialization}: Set $\bx_0\in \cZ$ s.t. $ g(\bx_0)-g^*\leq \epsilon_g/2$
	\FOR{$k = 0,\dots,K-1$}
		\STATE Compute $\bs_k \gets \argmin_{\bs\in \cX_k} \fprod{\grad f(\bx_k),\bs}$\\ $\cX_k\!\triangleq\! \{\bs\in \cZ: \fprod{\grad g(\bx_k), \bs-\bx_k}\!\leq\! g(\bx_0)-g(\bx_k)\}$
		\vspace{1mm}
		\IF{$\fprod{\grad f(\bx_k),\bx_k-\bs_k} \leq \epsilon_f$ and \\ $\fprod{\grad g(\bx_k),\bx_k-\bs_k}\leq \epsilon_g/2$ %
		}
			\STATE Return $\bx_k$ and STOP
		\ELSE
			\STATE $\bx_{k+1}\gets (1-\gamma_k)\bx_k+\gamma_k \bs_k$
		\ENDIF
	\ENDFOR
	\end{algorithmic}
	\end{algorithm}

Our key idea is to perform a CG update over a local approximation set $\cX_k$ at the $k$-th iteration instead of the hard-to-characterize set $\cX^*_g$. %
To this end, we borrow the idea of \emph{cutting plane} from the optimization literature~\citep{Boyd2018} and let $\cX_k$ be the intersection of $\cZ$ and the halfspace $\cH_k$: 
\begin{align}\label{eq:X_k}
  &\cX_k \triangleq \cZ \cap \cH_k,\\ 
  &%
  \cH_k\triangleq \{\bs\in {\reals^d}\mid \fprod{\grad g(\bx_k), \bs-\bx_k}\leq g(\bx_0)-g(\bx_k)\}.\nonumber
\end{align}
Indeed $\cX_k$ is potentially more tractable than $\cX_g^*$, as the difficult nonlinear inequality $g(\bx)\leq g^*$ in \eqref{eq:constrained_formulation} is replaced by a single linear inequality. Also,  
by using the convexity of $g$, we can show that the hyperplane $\mathcal{H}_k$ eliminates those points known to have a larger value than $g(\bx_0)$. 
Thus, if we initialize our algorithm such that $\bx_0$ is near-optimal for the lower-level problem, the linear inequality in \eqref{eq:X_k} ensures improvement in terms of the lower-level function. 
Further, this also implies that $\cX_k$ contains the solution set $\cX_g^*$, so we are guaranteed to make progress on the upper-level objective $f$.   
We formalize this claim in the following lemma. 
\begin{lemma}\label{lem:subset}
Recall $\cX_g^*$ as the solution set   for the lower-level problem in \eqref{eq:bi-simp} and recall the definition of the set $\cX_k$ in \eqref{eq:X_k}. Then,	for any $k\geq 0$, we have $\cX^*_g  \subseteq  \cX_k$. 
\end{lemma}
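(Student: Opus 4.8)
The plan is to verify the inclusion pointwise: fix an arbitrary $\bz \in \cX_g^*$ and show that $\bz \in \cX_k = \cZ \cap \cH_k$. Since $\cX_g^* = \argmin_{\bz \in \cZ} g(\bz) \subseteq \cZ$ by definition, membership $\bz \in \cZ$ is immediate, and the entire task reduces to checking that $\bz$ satisfies the single linear inequality defining the halfspace $\cH_k$, namely $\fprod{\grad g(\bx_k), \bz - \bx_k} \leq g(\bx_0) - g(\bx_k)$.

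The main step is to invoke the first-order convexity inequality for $g$ at the point $\bx_k$. Since $g$ is convex and differentiable by Assumption~\ref{assum:smooth}(ii), we have $g(\bz) \geq g(\bx_k) + \fprod{\grad g(\bx_k), \bz - \bx_k}$, which rearranges to $\fprod{\grad g(\bx_k), \bz - \bx_k} \leq g(\bz) - g(\bx_k)$. It then remains to replace $g(\bz)$ by an upper bound matching the right-hand side of the $\cH_k$ constraint. Because $\bz \in \cX_g^*$ is a minimizer of $g$ over $\cZ$ we have $g(\bz) = g^*$, and because $\bx_0 \in \cZ$ we have $g^* \leq g(\bx_0)$. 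Chaining these yields $\fprod{\grad g(\bx_k), \bz - \bx_k} \leq g(\bz) - g(\bx_k) = g^* - g(\bx_k) \leq g(\bx_0) - g(\bx_k)$, which is exactly the defining inequality of $\cH_k$. Hence $\bz \in \cH_k$, and together with $\bz \in \cZ$ we conclude $\bz \in \cX_k$.

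There is no real obstacle here; the only point worth flagging is that the argument uses nothing about the specific initialization tolerance $g(\bx_0)-g^*\leq \epsilon_g/2$ from Algorithm~\ref{alg:bilevel}, but only the trivial bound $g(\bx_0) \geq g^*$ valid for any $\bx_0 \in \cZ$. The near-optimality of $\bx_0$ is what later guarantees progress on the lower-level objective, whereas for the inclusion $\cX_g^* \subseteq \cX_k$ itself it is the convexity of $g$ together with $\bx_0$ being feasible that does all the work. Since the chain of inequalities holds for any iterate $\bx_k$ with no dependence on $k$ beyond $\bx_k$ entering through its own gradient and function value, the claim follows for all $k \geq 0$ simultaneously.
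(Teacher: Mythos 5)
Your proof is correct and is essentially identical to the paper's argument: both fix an arbitrary minimizer, apply the first-order convexity inequality for $g$ at $\bx_k$, and chain it with $g^* \leq g(\bx_0)$ to verify the halfspace constraint. Your closing remark that only feasibility of $\bx_0$ (not its near-optimality) is needed here is an accurate and worthwhile observation, but the core derivation matches the paper's.
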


Now we are ready to introduce our proposed CG-BiO method. We first initialize $\bx_0\in \cZ$ as a near-optimal solution for the lower-level problem, i.e., $g(\bx_0)-g^* \leq \epsilon_g/2$ for some prescribed accuracy $\epsilon_g$. This can be done by running the standard CG method on the lower-level problem, which requires at most $\bigO(1/\epsilon_g)$ iterations. %
Once the initialization step is done, we simply run CG with respect to the approximation sets $\cX_k$. Specificallly, at iteration~$k$, we solve the following subproblem over the set $\cX_k$ defined in~\eqref{eq:X_k}:
\vspace{-1mm}
\begin{equation}\label{eq:subproblem-lp}
\vspace{-1mm}
  \bs_k = \argmin_{\bs\in \cX_k}~ \fprod{\grad f(\bx_k),\bs},%
\end{equation}
{and update the iterate by $\bx_{k+1} = (1-\gamma_k)\bx_k+\gamma_k \bs_k$ with stepsize $\gamma_k \in [0,1]$.}
{We assume access to a linear optimization oracle that returns the solution of the subproblem in~\eqref{eq:subproblem-lp}, which is standard for projection-free methods~\citep{jaggi2013revisiting,lacoste2016convergence,mokhtari2018escape}. In particular,}
if $\cZ$ can be described by a system of linear inequalities, then problem~\eqref{eq:subproblem-lp} corresponds to a linear program and can be solved efficiently by a standard solver {as we will show in our experiments}.
We repeat the process above until we reach an accuracy of $\epsilon_f$ for the upper-level objective and an accuracy of $\epsilon_g$ for the lower-level objective.
The steps of our proposed CG-BiO method are summarized in Algorithm~\ref{alg:bilevel}. 

\section{CONVERGENCE ANALYSIS}\label{sec:analysis}
In this section, we analyze the iteration complexity of our CG-BiO method. 
We first consider the case where the upper-level function $f$ is convex. In this case, we choose the stepsize as $\gamma_k = 2/(k+2)$, %
which is a typical choice in the standard CG method~\citep{jaggi2013revisiting}.
\begin{theorem}[Convex upper-level]\label{thm:convex-upper-bound}
Suppose that Assumption~\ref{assum:smooth} holds and $f$ is convex. 
Let $\{\bx_k\}_{k=0}^{K}$ be the sequence generated by Algorithm \ref{alg:bilevel} with stepsize $\gamma_k  = 2/(k+2)$ for $k\geq 0$. Then we have
\begin{align*}
    f(\bx_K)-f^*\!\leq \frac{2L_f D^2}{K+1},
    \ \
	g(\bx_K)-g^*\leq \frac{2L_g D^2}{K+1}+\frac{\epsilon_g}{2}.
\end{align*}
\end{theorem}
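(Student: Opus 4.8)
The plan is to show that both inequalities arise from the \emph{same} Frank--Wolfe descent recursion $a_{k+1}\le (1-\gamma_k)a_k + \tfrac12\gamma_k^2 C$, differing only in the tracked quantity and the constant: for the upper level I take $a_k = f(\bx_k)-f^*$ with $C=L_fD^2$, and for the lower level I take $a_k=g(\bx_k)-g(\bx_0)$ with $C=L_gD^2$. First I would note that all iterates stay in $\cZ$, since $\bx_{k+1}$ is a convex combination of $\bx_k\in\cZ$ and $\bs_k\in\cX_k\subseteq\cZ$, so that $\|\bs_k-\bx_k\|\le D$ throughout.

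For the upper-level estimate, I apply the descent lemma for the $L_f$-smooth $f$ at $\bx_{k+1}=(1-\gamma_k)\bx_k+\gamma_k\bs_k$, using $\bx_{k+1}-\bx_k=\gamma_k(\bs_k-\bx_k)$ to get $f(\bx_{k+1})\le f(\bx_k)+\gamma_k\fprod{\grad f(\bx_k),\bs_k-\bx_k}+\tfrac12\gamma_k^2 L_fD^2$. The decisive step is bounding the linear term: because $\bs_k$ minimizes $\fprod{\grad f(\bx_k),\cdot}$ over $\cX_k$ and Lemma~\ref{lem:subset} guarantees $\bx^*\in\cX^*_g\subseteq\cX_k$, I obtain $\fprod{\grad f(\bx_k),\bs_k-\bx_k}\le\fprod{\grad f(\bx_k),\bx^*-\bx_k}\le f^*-f(\bx_k)$, the last inequality by convexity of $f$. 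Substituting yields the target recursion for $a_k=f(\bx_k)-f^*$.

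For the lower-level estimate, I again use the descent lemma, now for the $L_g$-smooth $g$, to get $g(\bx_{k+1})\le g(\bx_k)+\gamma_k\fprod{\grad g(\bx_k),\bs_k-\bx_k}+\tfrac12\gamma_k^2 L_gD^2$. Here neither convexity of the upper objective nor Lemma~\ref{lem:subset} is needed; instead the linear term is controlled directly by the cutting-plane constraint, since $\bs_k\in\cX_k\subseteq\cH_k$ forces $\fprod{\grad g(\bx_k),\bs_k-\bx_k}\le g(\bx_0)-g(\bx_k)$. Subtracting $g(\bx_0)$ and setting $a_k=g(\bx_k)-g(\bx_0)$ gives the identical recursion, now with the clean initial value $a_0=0$.

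It remains to solve $a_{k+1}\le(1-\gamma_k)a_k+\tfrac12\gamma_k^2 C$ with $\gamma_k=2/(k+2)$, which I would do by induction to establish $a_k\le 2C/(k+1)$; the inductive step reduces to the elementary inequality $\tfrac{k}{k+1}+\tfrac{1}{k+2}\le 1$. Evaluating at $k=K$ gives $f(\bx_K)-f^*\le 2L_fD^2/(K+1)$ and $g(\bx_K)-g(\bx_0)\le 2L_gD^2/(K+1)$, and adding the initialization guarantee $g(\bx_0)-g^*\le\epsilon_g/2$ produces the stated lower-level bound. The one place demanding care, and the main obstacle I anticipate, is the \emph{base case} of the upper-level induction: $a_0=f(\bx_0)-f^*$ need not be at most $2C$ (indeed $\bx_0$ need not be feasible for the bilevel problem, so $a_0$ may even be negative). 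I would resolve this by starting the induction at $k=1$: since $\gamma_0=1$ annihilates the $(1-\gamma_0)a_0$ term, one step gives $a_1\le \tfrac12 C\le 2C/(1+1)$ regardless of $a_0$, furnishing a valid base case. The lower-level induction has no such issue because $a_0=0$.
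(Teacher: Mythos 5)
Your proposal is correct and follows essentially the same route as the paper: the descent lemma for $f$ and $g$, bounding the upper-level linear term via Lemma~\ref{lem:subset} plus convexity and the lower-level linear term via the cutting-plane constraint, then solving the common recursion $a_{k+1}\le(1-\gamma_k)a_k+\tfrac12\gamma_k^2C$. The only cosmetic difference is bookkeeping: the paper multiplies the recursion by $A_{k+1}=(k+1)(k+2)$ and telescopes (with $A_0=0$ killing the initial term), whereas you induct directly on $a_k\le 2C/(k+1)$ and handle the base case via $\gamma_0=1$ — both correctly dispose of the potentially unbounded $f(\bx_0)-f^*$.
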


Theorem~\ref{thm:convex-upper-bound} shows that the gap of the upper-level objective can be upper bounded by $\bigO(1/K)$, similar to the convergence bound of standard CG. 
At the same time, the gap of the lower-level objective can also be controlled by a term of order $\bigO(1/K)$ in addition to the initial error $\epsilon_g/2$.  
As a corollary, Algorithm~\ref{alg:bilevel} will return an $(\epsilon_f,\epsilon_g)$-optimal solution when the number of iterations $K$ exceeds
\begin{equation*}
  \max\left\{\frac{2L_f D^2}{\epsilon_f},\frac{4L_g D^2}{\epsilon_g}\right\} = \mathcal{O}\left(\max\left\{\frac{1}{\epsilon_f},\frac{1}{\epsilon_g}\right\}\right).
  \vspace{-0.95mm}
\end{equation*}

Our complexity bound improves over the result by \cite{Kaushik2021}, who considered a different setup where both the upper-level and lower-level functions are Lipschitz but not necessarily smooth.    
Also, comparing with existing works in the same setup, our convergence rate for the lower-level objective matches those by \cite{sabach2017first,malitsky2017chambolle}, while we also provide a non-asymptotic convergence bound for the upper-level objective. To the best of our knowledge, our result provides the best-known bound for the considered setting. We also remark that our rate is tight at least within the family of projection-free methods, since it is known that their worst-case complexity is $\Theta(1/\epsilon_f)$ even for a single-level problem \citep{jaggi2013revisiting,lan2013complexity}. 

{
\vspace{2mm}
\begin{remark}
The initialization step requires $\bigO(1/\epsilon_g)$ iterations, and hence, this additional term does not change the overall complexity.  The same applies for the non-convex case below.
\end{remark}
}

Now we turn to the case where $f$ is non-convex. In this case, we choose the stepsize as a constant depending on the target accuracies as well as the problem parameters.
\begin{theorem}[Non-convex upper-level]\label{thm:nonconvex-upper-bound}
Suppose that Assumption~\ref{assum:smooth} holds. 
Let $\{\bx_k\}_{k=0}^{K-1}$ be the sequence generated by Algorithm \ref{alg:bilevel} with stepsize $\gamma_k = \min\big\{\frac{\epsilon_f}{ L_f D^2},\frac{\epsilon_g}{L_g D^2}\big\}$ for all $k\geq 0$. 
Define $\underf = \min_{\bx \in Z} f(\bx)$. 
Then for $$K \geq \max\Bigl\{\frac{2L_fD^2 (f(\bx_0)-\underf)}{\epsilon_f^2},\frac{2L_g D^2 (f(\bx_0)-\underf)}{\epsilon_f\epsilon_g }\Bigr\},$$ there exists $k^*\in\{0,1,\dots,K-1\}$ such that 
$
    \FW (\bx_{k^*})\leq \epsilon_f
$ and $g(\bx_{k^*})-g^*\leq \epsilon_g$.
\end{theorem}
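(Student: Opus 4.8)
The plan is to establish the two required conditions separately, in each case combining the smoothness descent inequality for the relevant level with Lemma~\ref{lem:subset}. For the upper level I will track the \emph{local} Frank--Wolfe gap $G_k \triangleq \fprod{\grad f(\bx_k), \bx_k - \bs_k}$, which by definition of $\bs_k$ equals $\max_{\bs\in\cX_k}\fprod{\grad f(\bx_k), \bx_k - \bs}$; for the lower level I will control $a_k \triangleq g(\bx_k) - g(\bx_0)$ using the cutting-plane inequality that defines $\cX_k$.

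First I would handle the upper level. Applying the descent lemma for $f$ to the update $\bx_{k+1} = (1-\gamma_k)\bx_k + \gamma_k \bs_k$ and using $\|\bs_k - \bx_k\| \leq D$ gives
\[
  f(\bx_{k+1}) \leq f(\bx_k) - \gamma_k G_k + \tfrac{1}{2}L_f \gamma_k^2 D^2 .
\]
With the constant stepsize $\gamma_k = \gamma \triangleq \min\{\epsilon_f/(L_f D^2),\, \epsilon_g/(L_g D^2)\}$, I would rearrange, sum over $k=0,\dots,K-1$, telescope, and bound $f(\bx_K)\geq \underf$ (valid since $\bx_K\in\cZ$). Dividing by $\gamma K$ yields
\[
  \min_{0\le k\le K-1} G_k \leq \frac{f(\bx_0) - \underf}{\gamma K} + \frac{L_f D^2 \gamma}{2}.
\]
Since $\gamma \leq \epsilon_f/(L_f D^2)$ the second term is at most $\epsilon_f/2$, and since $1/\gamma = \max\{L_f D^2/\epsilon_f,\, L_g D^2/\epsilon_g\}$, the stated lower bound on $K$ is exactly what forces the first term below $\epsilon_f/2$. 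Taking $k^*$ to achieve the minimum then gives $G_{k^*}\leq \epsilon_f$. The crucial link to the true optimality measure is Lemma~\ref{lem:subset}: because $\cX_g^* \subseteq \cX_k$, maximizing over $\cX_k$ dominates maximizing over $\cX_g^*$, so $G_k \geq \FW(\bx_k)$ for every $k$; hence $\FW(\bx_{k^*})\leq \epsilon_f$.

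Next I would handle the lower level. Applying the descent lemma for $g$ and invoking the defining inequality $\fprod{\grad g(\bx_k), \bs_k - \bx_k} \leq g(\bx_0) - g(\bx_k)$ of the halfspace $\cH_k$ (satisfied since $\bs_k\in\cX_k$) yields the recursion
\[
  a_{k+1} \leq (1-\gamma)\, a_k + \frac{L_g \gamma^2 D^2}{2}, \qquad a_0 = 0 .
\]
A one-line induction then shows $a_k \leq \tfrac{1}{2}L_g \gamma D^2$ for all $k$, and since $\gamma \leq \epsilon_g/(L_g D^2)$ this is at most $\epsilon_g/2$. Combining with the initialization guarantee $g(\bx_0) - g^* \leq \epsilon_g/2$ gives $g(\bx_k) - g^* \leq \epsilon_g$ \emph{uniformly} in $k$, and in particular at the index $k^*$ selected above.

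\textbf{Main obstacle.} The delicate point is that a single index $k^*$ must satisfy both conditions simultaneously, even though the Frank--Wolfe gap bound is only an averaging (best-iterate) statement. This is resolved by the asymmetry between the two estimates: the upper-level bound singles out the best iterate, while the lower-level bound holds for \emph{every} iterate thanks to the geometric contraction in the recursion for $a_k$, so there is no conflict. I therefore expect the real work to be in verifying that the constant stepsize $\gamma = \min\{\cdot,\cdot\}$ is simultaneously (i) small enough that the telescoped upper-level bound reproduces the claimed complexity and (ii) contractive with stationary level $\le \epsilon_g/2$ for the lower-level recursion; the two arguments of the minimum are precisely engineered to balance these competing requirements. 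The inequality $G_k \geq \FW(\bx_k)$ derived from Lemma~\ref{lem:subset} is the key enabling observation but is itself immediate.
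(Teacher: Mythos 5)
Your proposal is correct and follows essentially the same route as the paper: the descent lemma for $f$ combined with $\cX_g^*\subseteq\cX_k$ (Lemma~\ref{lem:subset}) to control the best-iterate Frank--Wolfe gap, and the geometric recursion $a_{k+1}\leq(1-\gamma)a_k+\tfrac{1}{2}L_g\gamma^2D^2$ to bound the lower-level gap uniformly over all iterates, which is exactly how the paper resolves the need for a single index $k^*$. The only cosmetic difference is that you substitute $G_k\geq\FW(\bx_k)$ at the end of the argument, whereas the paper folds it into the one-step lemma from the start.
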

As a corollary of Theorem~\ref{thm:nonconvex-upper-bound}, the number of iterations required to find an $(\epsilon_f,\epsilon_g)$-optimal solution can be upper bounded by ${\mathcal{O}}(\max\{1/\epsilon^2_f,1/(\epsilon_f\epsilon_g)\})$. We note that the dependence on the upper-level accuracy $\epsilon_f$ also matches that in the standard CG method for a single-level problem \citep{lacoste2016convergence,mokhtari2018escape}.

\blue{We end this section with the following remark. Since the algorithm's output $\hat\bx$ may lie outside of the feasible set $\cX_g^*$, both $f(\hat\bx)-f^*$ in Theorem~\ref{thm:convex-upper-bound} and $\FW(\hat\bx)$ in Theorem~\ref{thm:nonconvex-upper-bound} are not necessarily positive. While this might seem unconventional, we note that %
\cite{Kaushik2021}
also used $f(\hat{\bx})-f^*$ as the performance metric. In fact, this is also common in the literature on constrained optimization, where the generated iterate could be infeasible and thus $f(\hat{\bx})-f^*$ could be negative (see, e.g., \cite{beck2017first}). 
On the other hand, we note that it is in general impossible to prove convergence in terms of $|f(\hat{\bx})-f^*|$ due to a negative result by \cite{chen2023bilevel}. Specifically, for any first-order method and a given number of iterations $K$, they showed that there exists an instance of problem~\eqref{eq:bi-simp} where $|f(\bx_k)-f^*| \geq 1$ for all $0\leq k \leq K-1$. Therefore, to 
provide convergence bounds on $|f(\hat\bx)-f^*|$ or $|\FW(\hat\bx)|$, it is necessary to impose additional regularity conditions on problem~\eqref{eq:bi-simp}, which we discuss in the next section. 
}

\vspace{-1mm}
\subsection{Convergence under H\"olderian Error Bound}\label{subsec:error_bound}
\vspace{-1mm}

\blue{In this section, we complement the convergence results in Theorems~\ref{thm:convex-upper-bound} and~\ref{thm:nonconvex-upper-bound} by giving a lower bound on $f(\hat{\bx})-f^*$ and $\FW(\hat{\bx})$.}
Let $\hat{\bx}$ be an $(\epsilon_f,\epsilon_g)$-optimal solution as defined in Definition~\ref{def:optimal}. Intuitively, since $\hat{\bx}$ is $\epsilon_g$-optimal for the lower-level problem, it should be close to the optimal solution set $\cX^*_g$ under some regularity condition on $g$. As such, we can lower bound $f(\hat{\bx})-f^*$ by using the smoothness of $f$. 
Formally, we assume that the lower-level objective satisfies the H\"olderian error bound, which quantifies the growth rate of the objective value $g(\bx)$ as the point ${\bx}$ deviates from the optimal solution set $\cX_g^*$.   
 
\begin{assumption}\label{assum:Holder}
	The function $g$ satisfies the H\"olderian error bound for some $\alpha>0$ and $r\geq 1$, i.e, 
	\vspace{-0.5mm}
\begin{equation}\label{eq:error_bound}
\frac{\alpha}{r} \: \dist(\bx,\cX_g^*)^r\leq  g(\bx) - g^*, \qquad  \forall \bx\in \cZ,
\vspace{-0.5mm}
\end{equation}
where $\dist(\bx,\cX_g^*)\triangleq  \inf_{\bx'\in \cX_g^*}\norm{\bx-\bx'}$. 
\end{assumption}
We note that the error bound condition in~\eqref{eq:error_bound} is well-studied in the optimization literature (see \citep{pang1997error,bolte2017error,Roulet2020} and the references therein) and is known to hold generally when the function $g$ is analytic and the set $\cZ$ is bounded~\citep{lojasiewicz1959sur,luo1994error}. Two important special cases are: 1) $g$ satisfies \eqref{eq:error_bound} with $r=1$, i.e., $\cX_g^*$ is a set of weak sharp minima of $g$~\citep{burke1993weak,burke2005weak}; 2) $g$ satisfies \eqref{eq:error_bound} with $r=2$ known as quadratic growth condition \citep{drusvyatskiy2018error}. %

Under Assumption~\ref{assum:Holder}, we can establish the following lower bounds on $f(\hat{\bx})-f^*$ and $\FW(\hat{\bx})$. Notably, the following result is an intrinsic property of problem~\eqref{eq:bi-simp} and independent of the algorithm we use. 
\begin{proposition}\label{prop:error_bound}
	Assume that $g$ satisfies 
 Assumption~\ref{assum:Holder}, 
 and define $M= \max_{\bx\in \cX_g^*} \|\grad f(\bx)\|_*$. Then for any $\hat{\bx}$ that satisfies $g(\hat{\bx})-g^*\leq \epsilon_g$, it holds that: 
	\vspace{-1mm}
  \begin{enumerate}[(i)]
  \vspace{-2mm}
    \item If $f$ is convex, then 
    $
      f(\hat{\bx})-f^* \geq -M \left(\frac{r\epsilon_g}{\alpha}\right)^{\frac{1}{r}}$.
    \vspace{-1mm}
    \item If $f$ is non-convex and has $L_f$-Lipschitz gradient, then 
    $
		\FW(\hat{\bx}) \geq -M \left(\frac{r\epsilon_g}{\alpha}\right)^{\frac{1}{r}}- L_f\left(\frac{r\epsilon_g}{\alpha}\right)^{\frac{2}{r}}.
$
\vspace{-1mm}
  \end{enumerate}
  
\end{proposition}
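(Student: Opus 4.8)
The plan is to translate the lower-level suboptimality $g(\hat\bx)-g^*\le\epsilon_g$ into a bound on the distance from $\hat\bx$ to the solution set $\cX_g^*$, and then transfer this geometric closeness to the upper-level quantities via the convexity or smoothness of $f$. First I would invoke Assumption~\ref{assum:Holder}: since $\frac{\alpha}{r}\dist(\hat\bx,\cX_g^*)^r\le g(\hat\bx)-g^*\le\epsilon_g$, rearranging gives $\dist(\hat\bx,\cX_g^*)\le(r\epsilon_g/\alpha)^{1/r}$. Because $\cX_g^*$ is nonempty and compact (as noted after Assumption~\ref{assum:smooth}), the infimum defining the distance is attained, so I can fix a projection point $\bar\bx\in\cX_g^*$ with $\norm{\hat\bx-\bar\bx}=\dist(\hat\bx,\cX_g^*)\le(r\epsilon_g/\alpha)^{1/r}$. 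Note also that $\bar\bx\in\cX_g^*$ implies both $f(\bar\bx)\ge f^*$ and $\norm{\grad f(\bar\bx)}_*\le M$ by definition of $f^*$ and of $M$.

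For part (i), I would apply the first-order convexity inequality of $f$ at $\bar\bx$:
\begin{equation*}
f(\hat\bx)\ge f(\bar\bx)+\fprod{\grad f(\bar\bx),\hat\bx-\bar\bx}\ge f^*-\norm{\grad f(\bar\bx)}_*\norm{\hat\bx-\bar\bx},
\end{equation*}
where the second inequality uses $f(\bar\bx)\ge f^*$ together with the generalized Cauchy--Schwarz bound $\fprod{\grad f(\bar\bx),\hat\bx-\bar\bx}\ge-\norm{\grad f(\bar\bx)}_*\norm{\hat\bx-\bar\bx}$. Substituting $\norm{\grad f(\bar\bx)}_*\le M$ and the distance bound then yields $f(\hat\bx)-f^*\ge -M(r\epsilon_g/\alpha)^{1/r}$, as claimed.

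For part (ii), I would exploit that the FW gap is a \emph{maximum} over $\cX_g^*$, so lower-bounding it only requires evaluating at a single admissible point; taking $\bs=\bar\bx$ gives $\FW(\hat\bx)\ge\fprod{\grad f(\hat\bx),\hat\bx-\bar\bx}$. I then split $\grad f(\hat\bx)=\grad f(\bar\bx)+\bigl(\grad f(\hat\bx)-\grad f(\bar\bx)\bigr)$ and bound the two resulting inner products separately: the first by $-M\norm{\hat\bx-\bar\bx}$ exactly as in part (i), and the second by $-\norm{\grad f(\hat\bx)-\grad f(\bar\bx)}_*\norm{\hat\bx-\bar\bx}\ge -L_f\norm{\hat\bx-\bar\bx}^2$, using the $L_f$-Lipschitz gradient. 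Combining these and inserting the distance bound produces $\FW(\hat\bx)\ge-M(r\epsilon_g/\alpha)^{1/r}-L_f(r\epsilon_g/\alpha)^{2/r}$.

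The argument is a one-shot estimate, so there is no serious technical obstacle; the only point requiring care is the non-convex case, where one cannot compare $f$ values directly and must instead work with the linearized FW-gap form, controlling the mismatch between $\grad f(\hat\bx)$ and $\grad f(\bar\bx)$ through the Lipschitz term---this is precisely what generates the extra second-order correction $L_f(r\epsilon_g/\alpha)^{2/r}$. I would also note that $\bar\bx$ need not be unique (uniqueness would follow from convexity of $\cX_g^*$ under a strictly convex norm), but any minimizer of the distance suffices, since both inequalities use only $\norm{\hat\bx-\bar\bx}$ and membership $\bar\bx\in\cX_g^*$.
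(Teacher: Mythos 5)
Your proposal is correct and follows essentially the same route as the paper's proof: project $\hat\bx$ onto $\cX_g^*$, use the H\"olderian error bound to control the distance, then apply the first-order convexity inequality at the projection point for part (i) and, for part (ii), lower-bound the FW gap at that point after splitting $\grad f(\hat\bx)$ into $\grad f(\bar\bx)$ plus a Lipschitz-controlled remainder. No gaps to report.
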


By combining Theorems~\ref{thm:convex-upper-bound} and \ref{thm:nonconvex-upper-bound} with Proposition~\ref{prop:error_bound}, we obtain the following stronger convergence guarantees for the output of our proposed method. 
\begin{corollary}\label{coro:stronger_complexity}
  Suppose that Assumption~\ref{assum:smooth} holds and $g$ satisfies the H\"olderian error bound in Assumption~\ref{assum:Holder} with $\alpha>0$ and $r \geq 1$. Let $M= \max_{\bx\in \cX_g^*} \|\grad f(\bx)\|_*$.
  \vspace{-1mm}
  \begin{enumerate}[(i)]
  \vspace{-2mm}
    \item If $f$ in problem~\eqref{eq:bi-simp} is convex and we set $\epsilon_g = \frac{\alpha}{r}  \left(\frac{\epsilon_f}{M}\right)^r$, 
    then after $K=\bigO({1}/{\epsilon_f^r})$ iterations
    we have 
    $
      |f(\bx_K)-f^*| \leq \epsilon_f$ {and} $g({\bx}_K) - g^* \leq \epsilon_g
    $.
    \vspace{-1mm}
    \item If $f$ in problem~\eqref{eq:bi-simp} is non-convex and we set $\epsilon_g = \min\{\frac{\alpha}{r}  \left(\frac{\epsilon_f}{2M}\right)^r, \frac{\alpha}{r}\bigl(\frac{ \epsilon_f}{2L_f}\bigr)^{r/2}\}$,
    then after $K=\bigO(1/\epsilon_f^{r+1})$ iterations
    there exists $k^*\in\{0,\dots,K-1\}$ such that    $
      |\FW (\bx_{k^*})| \leq \epsilon_f$ {and}  $g(\bx_{k^*})-g^*\leq \epsilon_g
    $.
  \end{enumerate} 
\end{corollary}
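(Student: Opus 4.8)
The plan is to treat this corollary purely as a consequence of two ingredients already in hand: the algorithm-dependent \emph{upper} bounds from Theorems~\ref{thm:convex-upper-bound} and~\ref{thm:nonconvex-upper-bound}, and the algorithm-independent \emph{lower} bounds from Proposition~\ref{prop:error_bound}. The only real content is to check that the prescribed choice of $\epsilon_g$ (expressed in terms of $\epsilon_f$) makes the lower bound collapse to exactly $-\epsilon_f$, and then to count iterations. First I would note that in both parts the output $\hat{\bx}$ (namely $\bx_K$ in the convex case and $\bx_{k^*}$ in the non-convex case) satisfies $g(\hat{\bx})-g^*\leq\epsilon_g$, so Proposition~\ref{prop:error_bound} applies verbatim.

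For part (i), substituting $\epsilon_g=\frac{\alpha}{r}(\epsilon_f/M)^r$ gives $\frac{r\epsilon_g}{\alpha}=(\epsilon_f/M)^r$, hence $(\frac{r\epsilon_g}{\alpha})^{1/r}=\epsilon_f/M$, and Proposition~\ref{prop:error_bound}(i) yields $f(\hat{\bx})-f^*\geq -M\cdot(\epsilon_f/M)=-\epsilon_f$, supplying the missing side of the two-sided bound. For the upper side and the lower-level guarantee I would invoke Theorem~\ref{thm:convex-upper-bound}: the bound $f(\bx_K)-f^*\leq 2L_fD^2/(K+1)\leq\epsilon_f$ needs $K=\bigO(1/\epsilon_f)$, while forcing $g(\bx_K)-g^*\leq 2L_gD^2/(K+1)+\epsilon_g/2\leq\epsilon_g$ needs $2L_gD^2/(K+1)\leq\epsilon_g/2$, i.e.\ $K=\bigO(1/\epsilon_g)=\bigO(1/\epsilon_f^r)$. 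Taking the maximum and using $r\geq 1$ so that $1/\epsilon_f^r$ dominates $1/\epsilon_f$, the total count is $K=\bigO(1/\epsilon_f^r)$, as claimed.

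For part (ii) the structure is identical but the lower bound has two terms. Writing $t\triangleq(\frac{r\epsilon_g}{\alpha})^{1/r}$, the two arguments of the min defining $\epsilon_g$ are precisely what guarantee $Mt\leq\epsilon_f/2$ (from $\epsilon_g\leq\frac{\alpha}{r}(\epsilon_f/(2M))^r$, which gives $t\leq\epsilon_f/(2M)$) and $L_f t^2\leq\epsilon_f/2$ (from $\epsilon_g\leq\frac{\alpha}{r}(\epsilon_f/(2L_f))^{r/2}$, which gives $t^2\leq\epsilon_f/(2L_f)$). Hence Proposition~\ref{prop:error_bound}(ii) gives $\FW(\hat{\bx})\geq -Mt-L_ft^2\geq -\epsilon_f$. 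The upper side $\FW(\bx_{k^*})\leq\epsilon_f$ together with $g(\bx_{k^*})-g^*\leq\epsilon_g$ comes directly from Theorem~\ref{thm:nonconvex-upper-bound}, and combining the two sides yields $|\FW(\bx_{k^*})|\leq\epsilon_f$. For the iteration count, since $r\geq 1$ and $\epsilon_f$ is small the binding (smaller) term in the min is the $\bigO(\epsilon_f^r)$ one, so $\epsilon_g=\bigO(\epsilon_f^r)$; then the two terms in Theorem~\ref{thm:nonconvex-upper-bound} are $\bigO(1/\epsilon_f^2)$ and $\bigO(1/(\epsilon_f\epsilon_g))=\bigO(1/\epsilon_f^{r+1})$, and the latter dominates because $r+1\geq 2$, giving $K=\bigO(1/\epsilon_f^{r+1})$.

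I do not expect any genuine obstacle here: the argument is essentially algebraic bookkeeping, and the constants in the definition of $\epsilon_g$ are reverse-engineered precisely so that each piece of the lower bound reduces to $\epsilon_f$ (or $\epsilon_f/2$). The one point requiring a little care is the dominance analysis for the complexity---verifying that the lower-level accuracy $\epsilon_g$, not the upper-level accuracy $\epsilon_f$, drives the overall rate, which hinges on the elementary comparisons $1/\epsilon_f^r\geq 1/\epsilon_f$ and $1/\epsilon_f^{r+1}\geq 1/\epsilon_f^2$ for $r\geq 1$ and small $\epsilon_f$. I would also make explicit the constant $f(\bx_0)-\underf$ hidden inside the $\bigO$ for the non-convex count, and note that the $\bigO(1/\epsilon_g)$ initialization cost is itself $\bigO(1/\epsilon_f^r)$ and hence does not change the stated order.
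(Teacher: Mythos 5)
Your proposal is correct and follows essentially the same route as the paper's proof: both parts combine the upper bounds of Theorems~\ref{thm:convex-upper-bound} and~\ref{thm:nonconvex-upper-bound} with the lower bounds of Proposition~\ref{prop:error_bound}, verify that the prescribed $\epsilon_g$ makes each term of the lower bound equal to $\epsilon_f$ (or $\epsilon_f/2$), and substitute $\epsilon_g$ into the iteration counts. The only cosmetic difference is that the paper expands the non-convex iteration bound into three explicit terms rather than arguing which branch of the min is binding, but the conclusion is identical.
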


\begin{figure*}[t!]
  \centering
  \subfloat[Lower-level gap]{\includegraphics[width=0.25\textwidth]{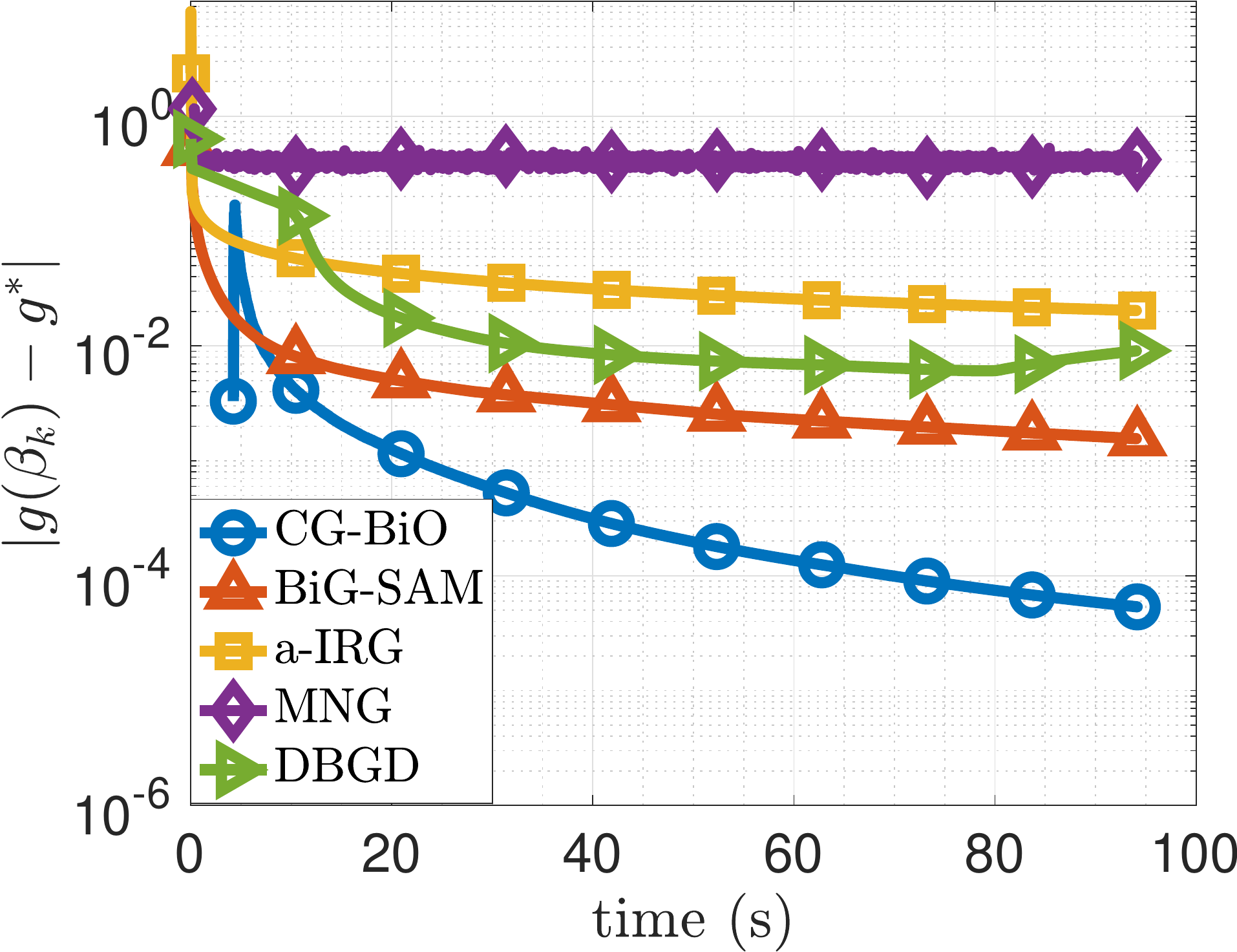}}
  \qquad
  \subfloat[Upper-level gap]{\includegraphics[width=0.25\textwidth]{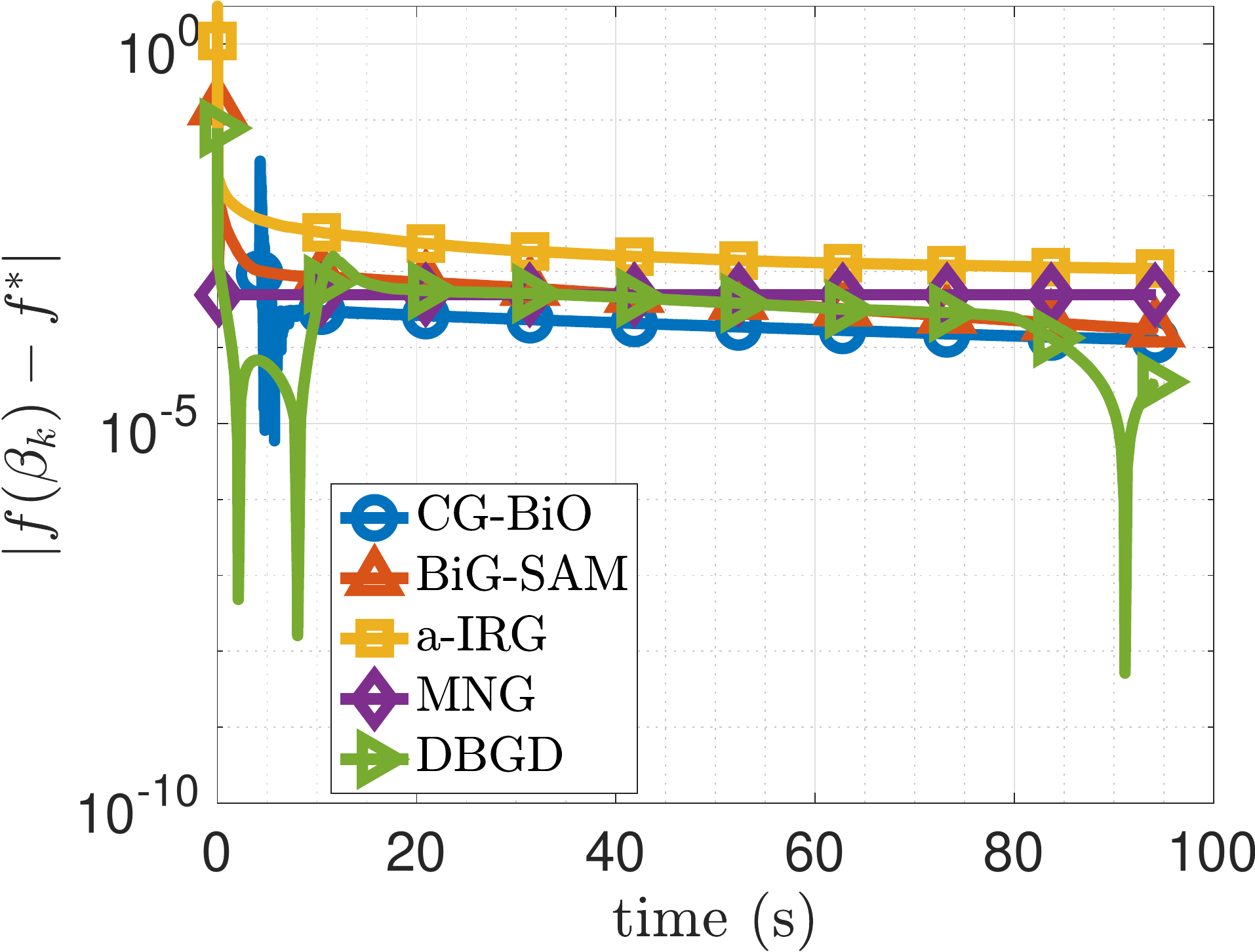}}
  \qquad
  \subfloat[Test error objective]{\includegraphics[width=0.25\textwidth]{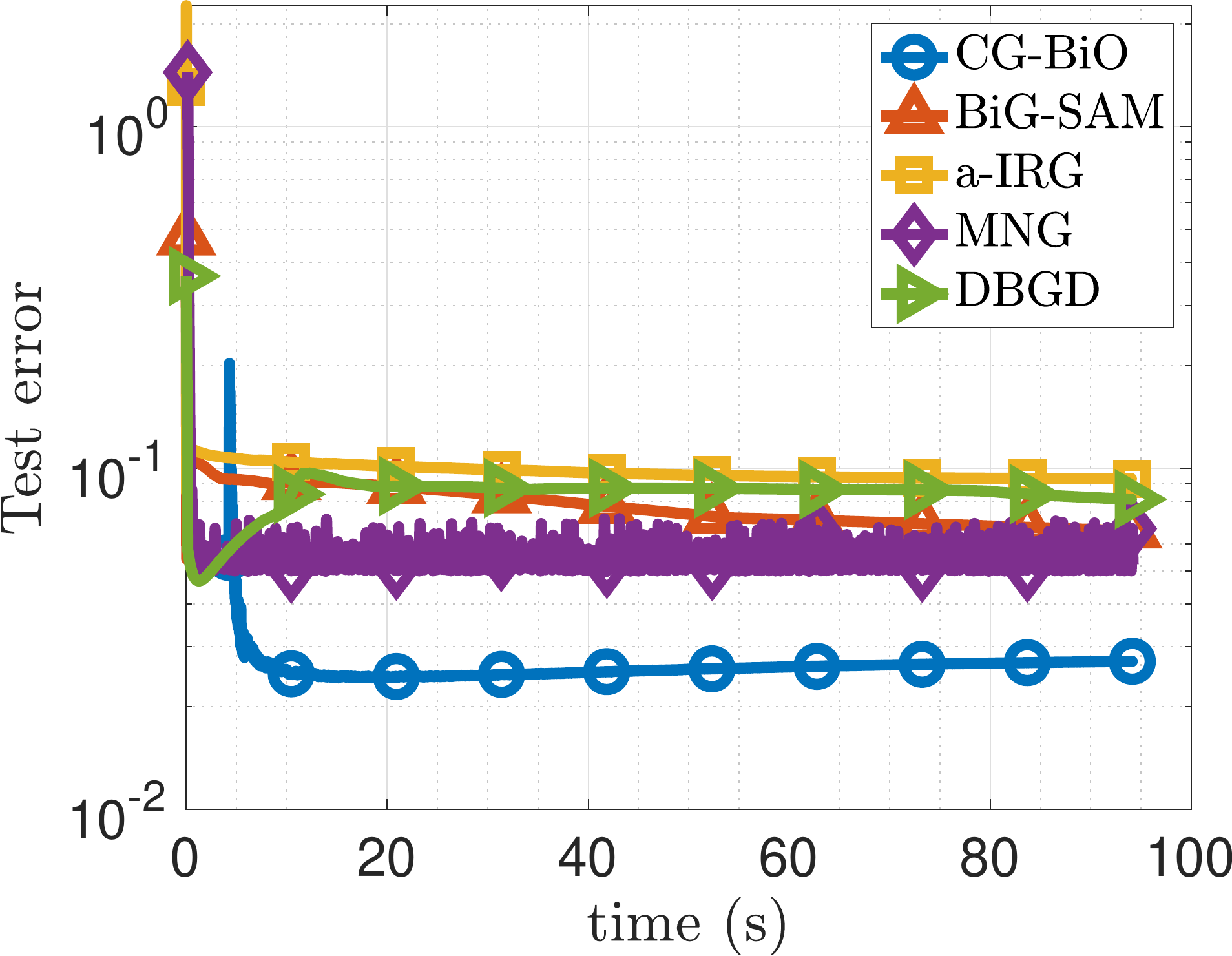}}
   \vspace{-1mm}
  \caption{The performance of CG-BiO {compared with BiG-SAM, a-IRG and MNG} on problem~\eqref{eq:over_regression}. %
  }
  \label{fig:num_appts1}\vspace{-3mm}
\end{figure*}

Corollary~\ref{coro:stronger_complexity} shows that under the $r$-th H\"olderian error bound assumption, we can find an iterate to be $\epsilon_f$-close to optimality within $\bigO(1/\epsilon_f^r)$ iterations in the convex case, and to be $\epsilon_f$-close to stationarity within $\bigO(1/\epsilon_f^{r+1})$ iterations in the non-convex case.

\section{NUMERICAL EXPERIMENTS}\label{sec:numeric}
In this section, we test our method on three different bilevel optimization problems, described in Section~\ref{sec:pre}, with real and synthetic datasets and compare our method 
{with other existing methods in the literature~\citep{beck2014first,sabach2017first,Kaushik2021,gong2021automatic}.}
\subsection{\rj{Over-parameterized Regression}}
For Example~\ref{ex:over_regression}, we consider a sparse linear regression problem on the Wikipedia Math Essential dataset \citep{rozemberczki2021pytorch}, which consists of a data matrix $\bA\in \reals^{{n \times d}}$ {with $n=1068$ instances and $d=730$ attributes}
and an outcome vector $\bb\in \reals^{n}$. 
We assign 60\% of the dataset as the training set $(\bA_{\mathrm{tr}},\bb_{\mathrm{tr}})$, 20\% as the validation set $(\bA_{\mathrm{val}},\bb_{\mathrm{val}})$ and the rest as the test set $(\bA_{\mathrm{test}},\bb_{\mathrm{test}})$. 
Then the lower-level objective in \eqref{eq:over_regression} is the training error $g(\bz)=\frac{1}{2}\|\bA_{\mathrm{tr}}\bz-\bb_{\mathrm{tr}}\|_2^2$, the upper-level objective is the validation error $f(\bbeta)=\frac{1}{2}\|\bA_{\mathrm{val}}\bbeta-\bb_{\mathrm{val}}\|_2^2$, and the constraint set is $\cZ=\{\bbeta\mid \|\bbeta\|_1 \leq \lambda\}$ for some $\lambda>0$ to induce sparsity in $\bbeta$. We also use the test error $\frac{1}{2}\|\bA_{\mathrm{test}}\bbeta-\bb_{\mathrm{test}}\|_2^2$ as our performance metric. 
{Note that the regression problem is over-parameterized since the number of features $d$ is larger than the number of data instances in the training set.}
We compare the performance of our method CG-BiO against the MNG method by \cite{beck2014first}, the Bilevel Gradient SAM (BiG-SAM) by \cite{sabach2017first}, the averaging iteratively regularized gradient (a-IRG) by \cite{Kaushik2021}, \blue{and the Dynamic Barrier Gradient Descent (DBGD) by \cite{gong2021automatic}.} \blue{It is worth noting that while we can implement these methods numerically, some of them are not directly applicable to our problem setting and thus lack any convergence guarantee; see Appendix~\ref{appen:experiment} for more discussions.} %
For benchmarking purposes, we use CVX \citep{cvx,gb08} to solve the lower-level problem and the constrained reformulation in \eqref{eq:constrained_formulation} to obtain the optimal values $g^*$ and $f^*$, respectively. 

In Fig.~\ref{fig:num_appts1}(a), we observe that CG-BiO converges at a faster rate than {the other baseline methods} %
in terms of the lower-level objective, which confirms our theoretical result (cf. Table~\ref{tab:bilevel}). 
Fig.~\ref{fig:num_appts1}(b) and (c) show that 
CG-BiO achieves a smaller upper-level objective gap as well as a smaller test error compared to %
other methods within the same run time.   
Interestingly, after the initial stage, the upper-level objective $f(\bbeta_k)$ of CG-BiO  \emph{increases}, while the optimality gap $|f(\bbeta_k)-f^*|$ \emph{decreases}. 
This suggests that CG-BiO may ``overshoot'' at the beginning due to its relatively large stepsize. Nevertheless, as the number of iterations increases and the level of infeasibility decreases, the upper-level objective of CG-BiO approaches the optimal value of the bilevel problem, which is in line with Proposition~\ref{prop:error_bound}.

\subsection{\rj{Fair Classification}}

We use the Adult income dataset \citep{Dua_2019}  containing 48,842 subjects each with 14 attributes, where the task is to predict whether the annual income of a given subject exceeds \$50K. For efficiency, we randomly sample 2,000 data points as the training set and 1,000 as the test set. We choose ``sex'' as the sensitive attribute $v$. 
Further, we adopt the logistic regression classifier as our model, where the posterior probability is given by $\Pr(\hat{y}_i=1\,\vert\,\bx_i;\bbeta)=1/(1+e^{-\bx_i^\top \bbeta})$ for a given feature vector $\bx_i$ and parameter $\bbeta$. Concretely, the lower-level problem is a sparse logistic regression problem for some $\lambda>0$:
\vspace{-1mm}
\begin{equation}\label{eq:fair_lower_level}
\vspace{-1mm}
  \min_{\bbeta\in \reals^d}~g(\bbeta) \!=\!- \frac{1}{n}\sum_{i=1}^n \log \Pr(\hat{y}_i\! =\! y_i\,\vert\,\bx_i;\bbeta) \ \  \hbox{s.t.}\;\|\bbeta\|_1 \leq \lambda,
\end{equation}
while the upper-level objective is the squared covariance:
\vspace{-1mm}
\begin{equation}\label{eq:fair_upper_level}
\vspace{-1mm}
  f(\bbeta) %
  \!=\! \Big(\frac{1}{n} \sum_{i=1}^n (v_i-\bar{v}) \Pr(\hat{y}_i=1\,\vert\,\bx_i;\bbeta)\Big)^2,
\end{equation}  
where $\bar{v} = \frac{1}{n}\sum_i v_i$. 
Note that the lower-level objective in \eqref{eq:fair_lower_level} is convex while the upper-level in \eqref{eq:fair_upper_level} is non-convex. We numerically verified that the lower-level problem can possess multiple optimal solutions.

\begin{figure*}[t!]
  \centering
  \subfloat[Lower-level gap]{\includegraphics[width=0.23\textwidth]{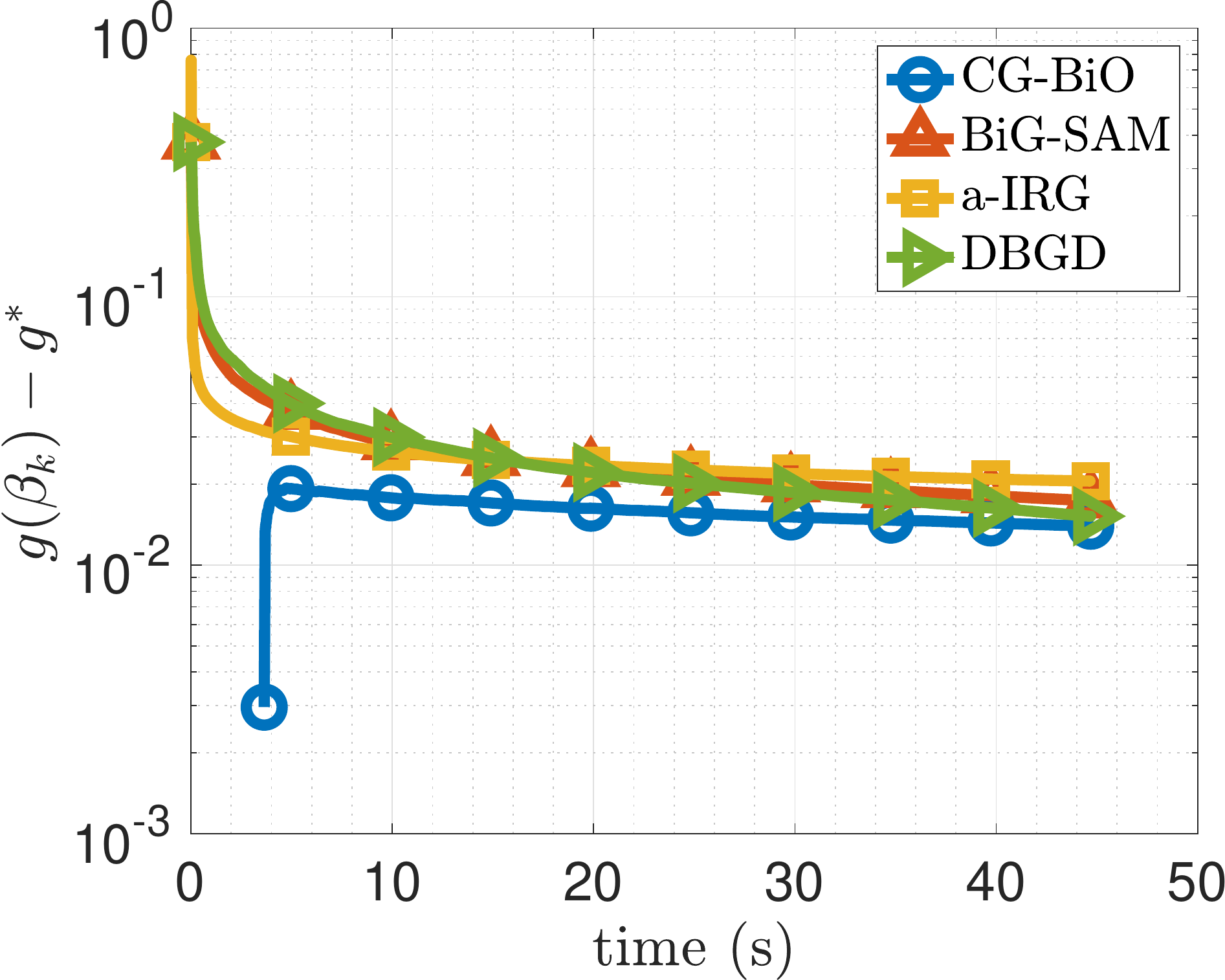}}
  \hfill
  \subfloat[Upper-level objective]{\includegraphics[width=0.23\textwidth]{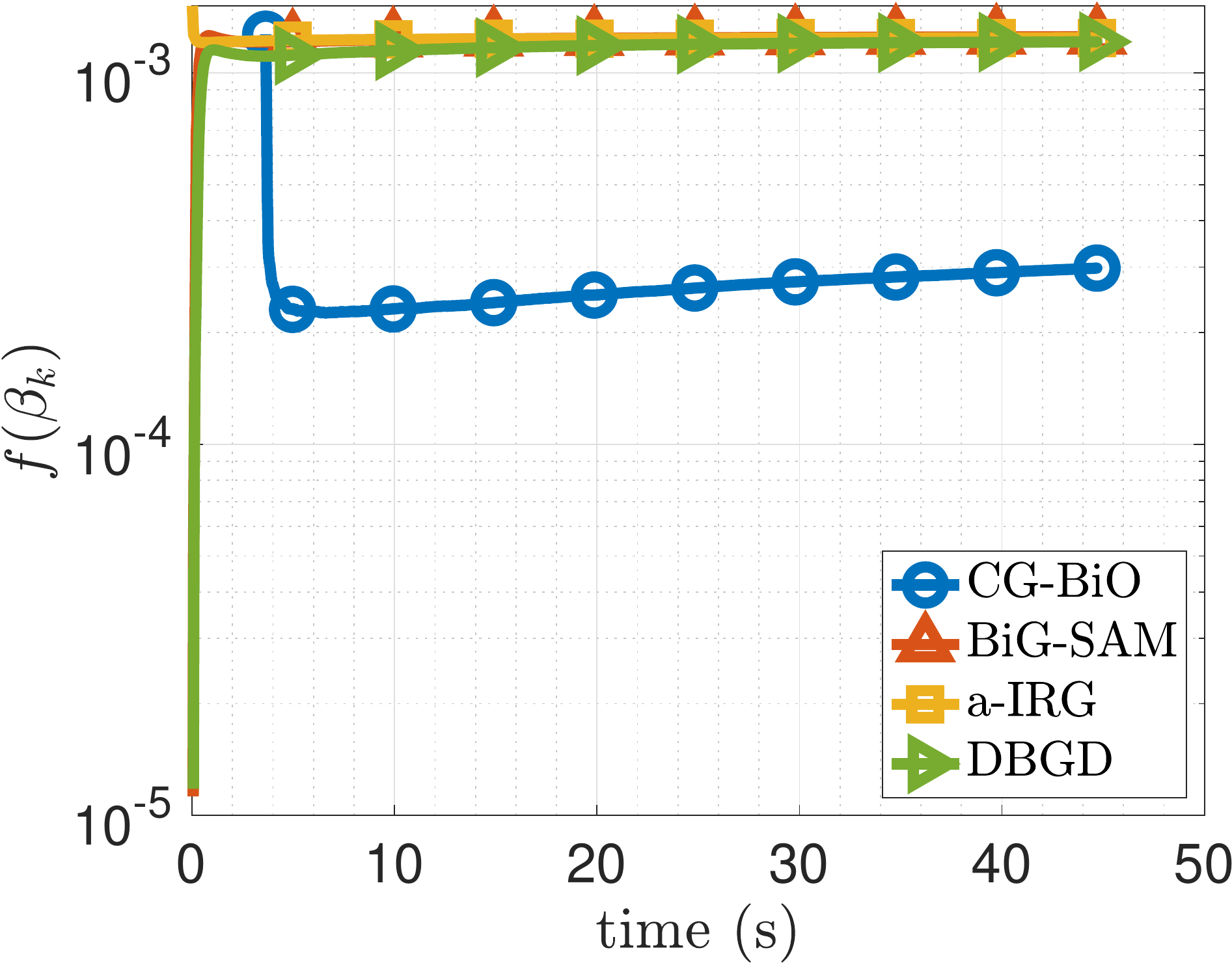}}
  \hfill
  \subfloat[Test set accuracy]{\includegraphics[width=0.23\textwidth]{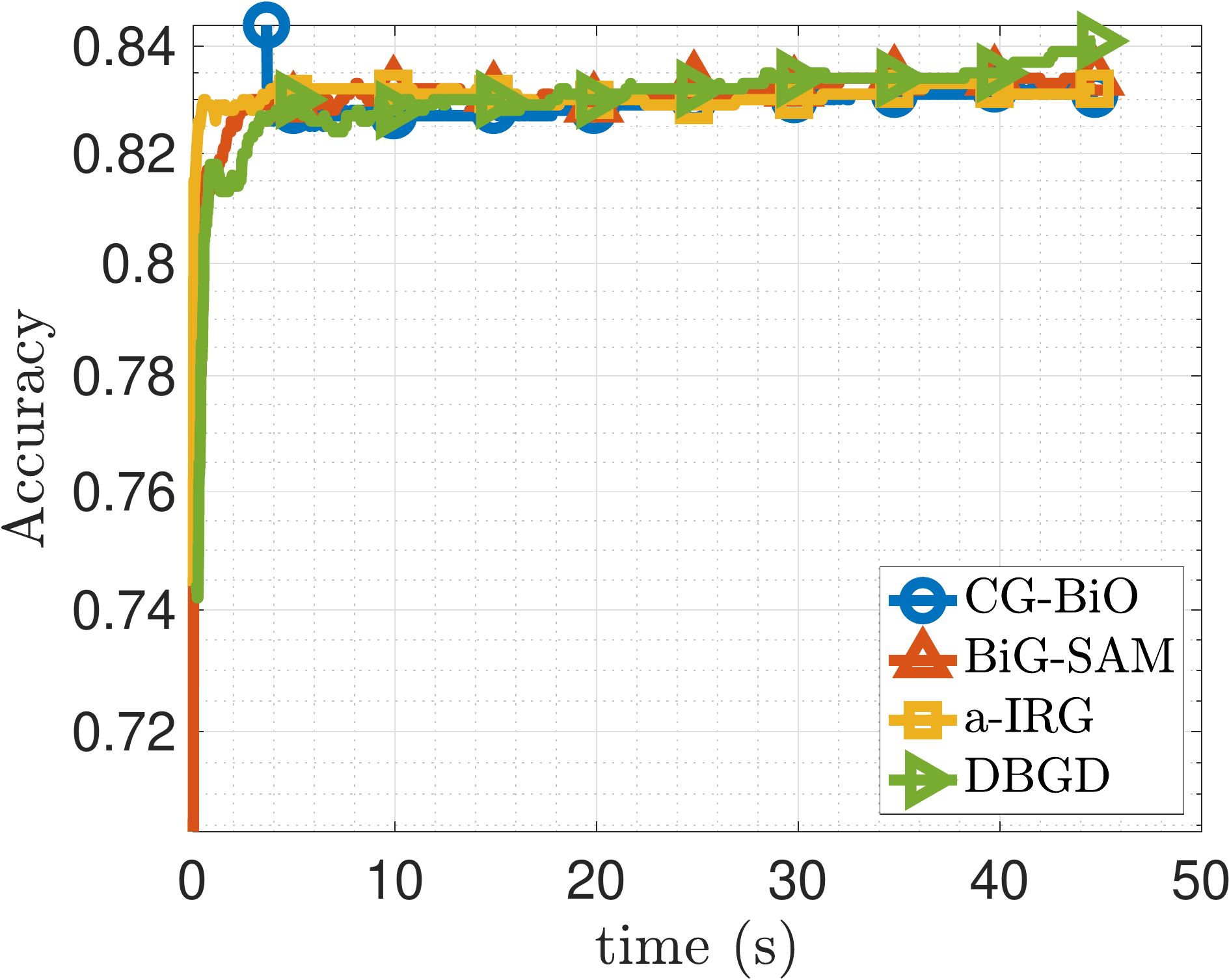}}
  \hfill
  \subfloat[$p$\%-rule]{\includegraphics[width=0.223\textwidth]{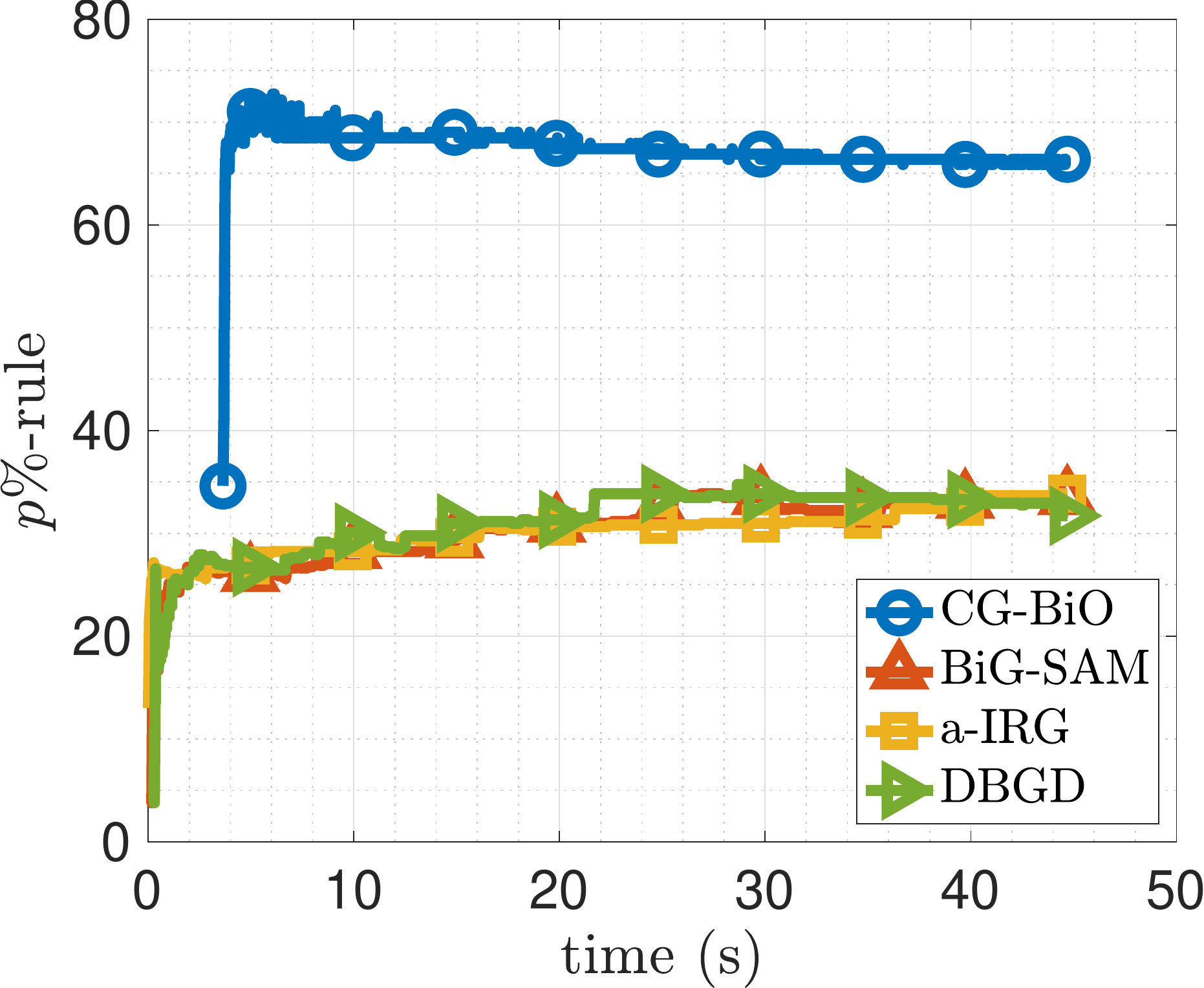}}
  \vspace{-1mm}
  \caption{The performance of CG-BiO {compared with BiG-SAM, a-IRG} on the bilevel problem defined in \eqref{eq:fair_lower_level} and \eqref{eq:fair_upper_level}. %
  }
  \label{fig:fair} %
\end{figure*}

\begin{figure*}[t!]
  \centering
  \subfloat[Lower-level objective]{\includegraphics[width=0.25\textwidth]{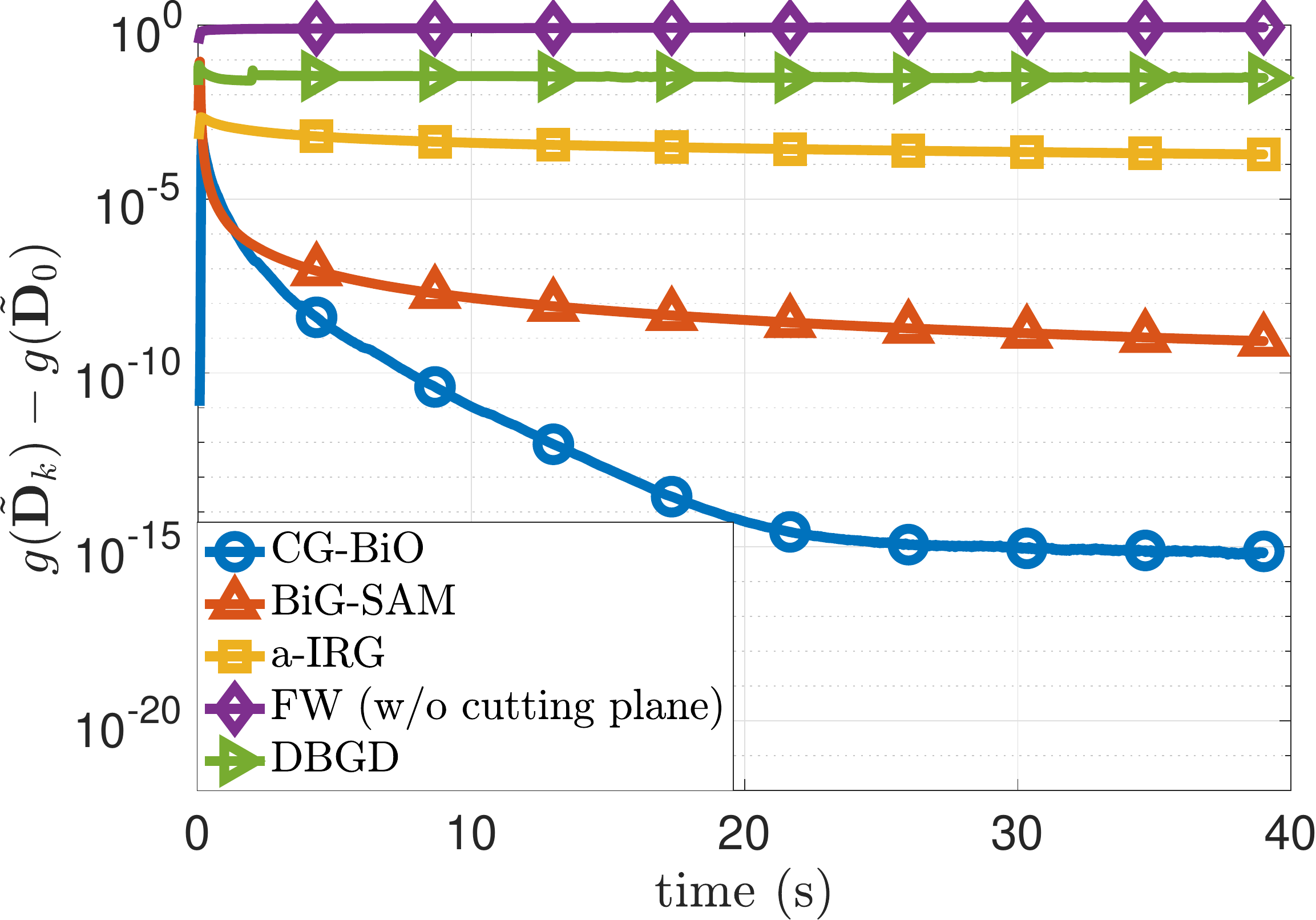}}
  \qquad
  \subfloat[Upper-level objective]{\includegraphics[width=0.25\textwidth]{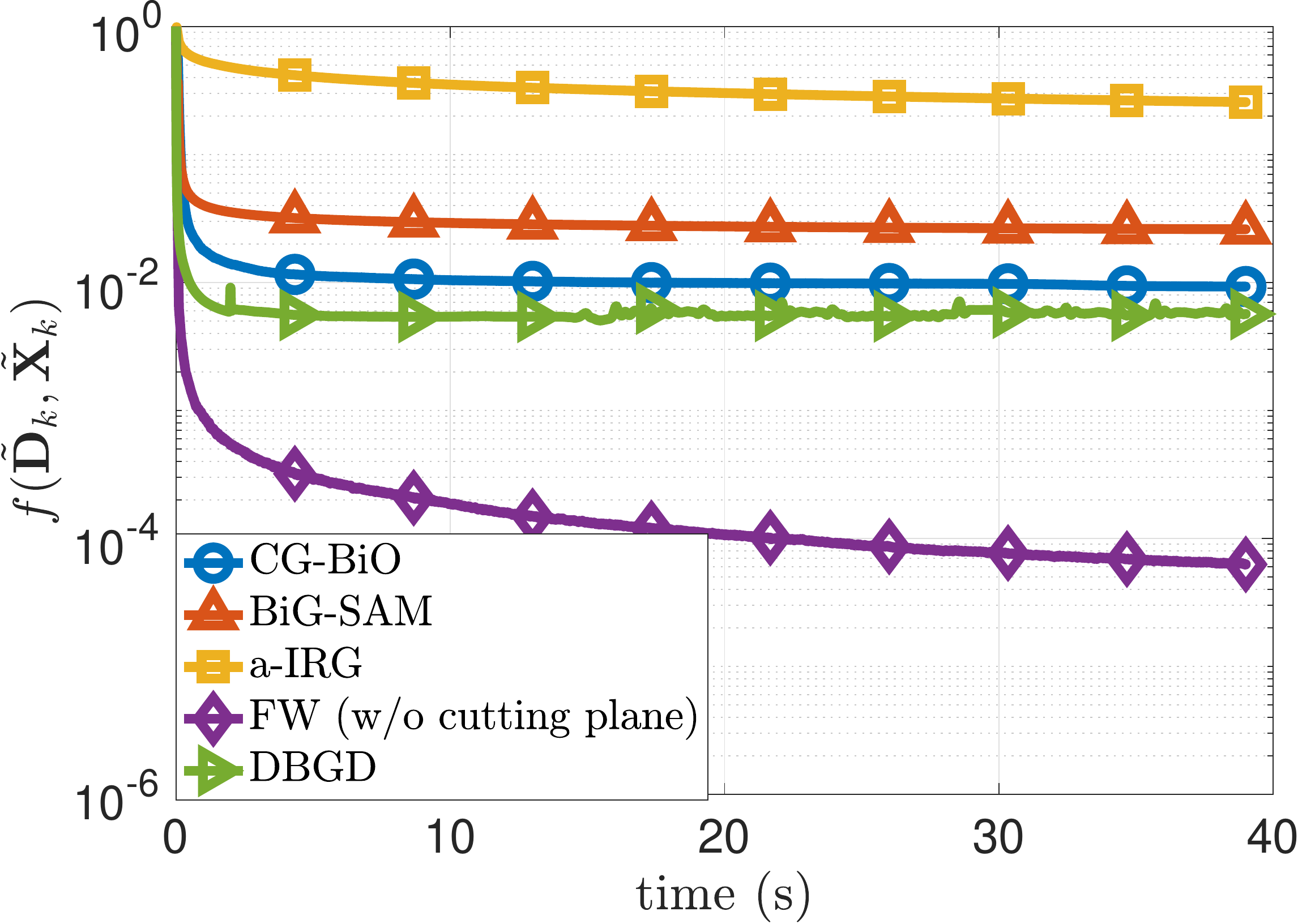}}
  \qquad
  \subfloat[Recovery rate]{\includegraphics[width=0.245\textwidth]{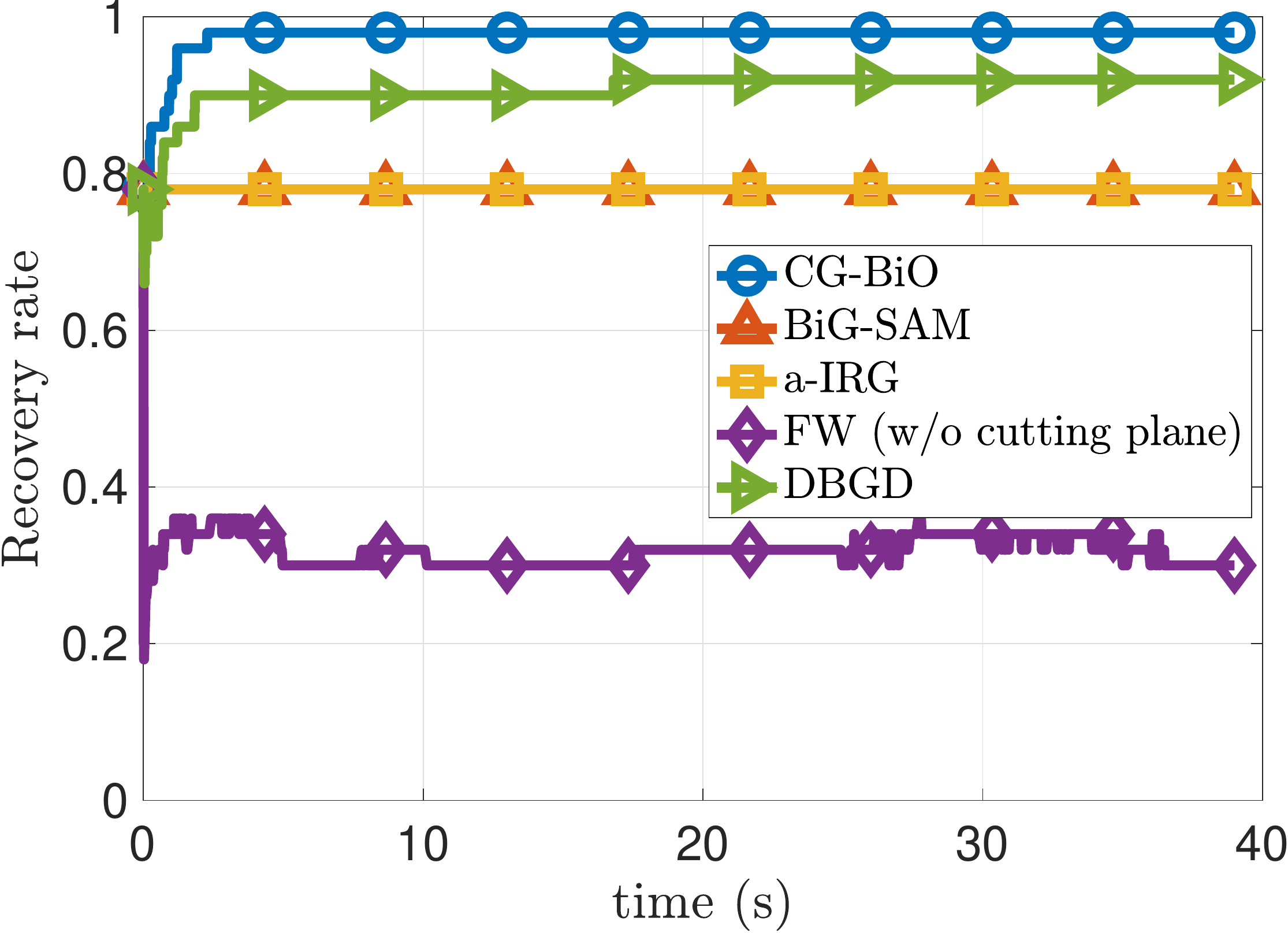}}
  \vspace{-1mm}
  \caption{The performance of CG-BiO {compared with BiG-SAM, a-IRG} and the baseline CG method on problem~\eqref{eq:dict_learning}. %
  }
  \label{fig:nonconvex} %
\end{figure*}

{In Fig.~\ref{fig:fair}, we compare CG-BiO against BiG-SAM, a-IRG \blue{and DBGD}. For this example, we did not implement the MNG method as it is computationally intractable; see Appendix~\ref{appen:experiment}. For a fair comparison, we manually tune the hyperparameters such that all achieve a similar performance on the lower-level problem as shown in Fig.~\ref{fig:fair}(a), and judge their efficiency by the error of the upper-level objective. Fig.~\ref{fig:fair}(b) shows that CG-BiO is able to achieve a smaller upper-level objective (smaller covariance). As a result, we observe in Fig.~\ref{fig:fair}(c) and (d) that it reaches a much better $p$\%-rule with a similar level of accuracy on the test set.  
}

\subsection{\rj{Dictionary Learning}}

We evaluate our CG-BiO method for problem~\eqref{eq:dict_learning} on a synthetic dataset, similar to the setup in \cite{rakotomamonjy2013direct}. 
We first generate the true dictionary $\tilde{\bD}^* \in \reals^{25\times 50}$ consisting of 50 basis vectors in $\reals^{25}$, each of which has its entries drawn from a standard Gaussian distribution and is normalized to have unit $\ell_2$-norm. We further construct the two dictionaries $\bD^*$ and ${\bD'}^*$ consisting of 40 and 20 basis vectors in $\tilde{\bD}^*$, respectively (and hence
they share 10 bases in common). The two datasets $\bA\!=\!\{\ba_1,\dots,\ba_{250}\}$ and $\bA'\!=\!\{\ba'_1,\dots,\ba'_{200}\}$ are generated according to the rules: 
\begin{alignat*}{3}
    \ba_i & = \bD^* \bx_i + \bn_i,&\quad i\!=\!1,\dots,250; \\
    \ba_k' & = {\bD'}^{*} \bx'_k + \bn'_k, &\quad k\!=\!1,\dots,200,
\end{alignat*}
where $\{\bx_i\}_{i=1}^{250},\{\bx_k'\}_{k=1}^{200}$ are sparse coefficient vectors and $\{\bn_i\}_{i=1}^{250},\{\bn'_k\}_{k=1}^{200}$ are random Gaussian noise vectors. Since neither $\bA$ nor $\bA'$ contains the full information of the true dictionary $\tilde{\bD}^*$, it is crucial for our learning algorithm to update our dictionary given the new dataset $\bA'$ while retaining our knowledge from the old dataset $\bA$. 

In our experiment, we first solve the standard dictionary learning problem in \eqref{eq:dict_learning_low} using the dataset $\bA$ to obtain the initial dictionary $\hat{\bD}$ and the coefficient vectors $\{\hat{\bx}_i\}_{i=1}^{250}$. Then we use the reconstruction error on $\bA$ with respect to $\{\hat{\bx}_i\}_{i=1}^{250}$ to define the lower-level objective in problem~\eqref{eq:dict_learning}, and use the error on the new dataset $\bA'$ to define the upper-level objective. In this case, $\hat{\bD}$ serves as a near-optimal solution for the lower-level problem. We compare CG-BiO with BiG-SAM and a-IRG.
{Similar to the previous experiment, we exclude MNG due to its computational intractability.}
Moreover, to demonstrate the necessity of the cutting plane in~\eqref{eq:X_k}, we also run a baseline method that performs the CG update over the set $\cZ$ instead of $\cX_k$ (cf. the update in~\eqref{eq:subproblem-lp}). This method ignores the lower-level objective and may be regarded as applying the standard CG method solely on the upper-level objective. In all algorithms, we initialize %
$\tilde{\bD}$ with the dictionary $\hat{\bD}$ learned from $\bA$ and initialize %
$\tilde{\bX}$ randomly.

We report our results in Fig.~\ref{fig:nonconvex}. 
In addition to the the upper- and lower-level objective values, we use the recovery rate of the true basis vectors as our performance metric. Specifically,  
a basis vector $\tilde{\bd}^*_i$ in $\tilde{\bD}^*$ is regarded as successfully recovered if there exists $\tilde{\bd}_j$ in $\tilde{\bD}$ such that $|\langle \tilde{\bd}^*_i, \tilde{\bd}_j\rangle|\!>\!0.9$. 
In Fig.~\ref{fig:nonconvex}(a) and (b), 
we observe that CG-BiO converges faster than BiG-SAM and a-IRG, and it also achieves smaller errors in terms of both the upper- and lower-level objectives. 
\blue{DBGD achieves a similar upper-level objective value as CG-BiO, but performs poorly in terms of the lower-level objective.}
On the other hand, the baseline CG method only focuses on the upper-level objective and as a result incurs a much larger error on the lower-level objective. 
In terms of recovery rate, Fig.~\ref{fig:nonconvex}(c) shows that CG-BiO recovers almost all basis vectors in $\tilde{\bD}^*$ at the end of its execution \blue{and performs slightly better than DBGD}. 
In contrast, both BiG-SAM and a-IRG only learn from the dataset $\bA$ due to their slow convergence, while the baseline CG method ``forgets'' the basis vectors previously learned and only recovers those underlying the new dataset~$\bA'$.

\section{CONCLUSION}
In this paper, we proposed a CG-based method to solve the class of simple bilevel optimization problems. We closed an important gap in the existing literature by providing a tight non-asymptotic complexity bound for the upper-level objective. Specifically, we proved that our CG-BiO method finds an $(\epsilon_f,\epsilon_g)$-optimal solution after at most ${\mathcal{O}}(\max\{1/\epsilon_f,1/\epsilon_g\})$ iterations when the upper-level objective $f$ is convex, and after at most ${\mathcal{O}}(\max\{1/\epsilon_f^2,1/(\epsilon_f\epsilon_g)\})$ iterations when $f$ is non-convex. 
We further strengthened our results when the lower-level problem satisfies the H\"olderian error bound assumption.
The numerical results also showed the superior performance of our method compared to existing algorithms. 

\subsubsection*{Acknowledgements}
The research of R. Jiang and A. Mokhtari is supported in part by NSF Grants 2127697, 2019844, and  2112471,  ARO  Grant  W911NF2110226,  the  Machine  Learning  Lab  (MLL)  at  UT  Austin, and the Wireless Networking and Communications Group (WNCG) Industrial Affiliates Program. The research of N. Abolfazli and E. Yazdandoost Hamedani is supported by NSF Grant 2127696.

\bibliographystyle{abbrvnat}
\bibliography{references}

\appendix
\onecolumn
\section{SUPPORTING LEMMAS}%
\subsection{Proof of Lemma~\ref{lem:subset}}
Let $\bx_g^*$ be any point in $\cX^*_g$, i.e., any optimal solution of the lower-level problem. By definition, we have $g(\bx_g^*) = g^*$. Since $g$ is convex and $g^*\leq g(\bx_0)$, we have 
\begin{equation*}
	g(\bx_0)-g(\bx_k) \geq g^*-g(\bx_k) = g(\bx_g^*)-g(\bx_k) \geq \fprod{\grad g(\bx_k),\bx_g^*-\bx_k},
\end{equation*}
which implies $\bx_g^*\in \cX_k$. Hence, we conclude that $\cX^*_g \subseteq \cX_k$. 

\subsection{Improvement in One Step}
The following lemma characterizes the improvement of both the upper-level and lower-level objective values after one step of Algorithm~\ref{alg:bilevel}.  
\begin{lemma}\label{lem:one-step}
	Let $\{\bx_k\}_{k=0}^{K}$ be the sequence generated by Algorithm~\ref{alg:bilevel}. Suppose Assumption \ref{assum:smooth} holds, then for any $k\geq 0$ we have 
	\begin{align}
		&f(\bx_{k+1})\leq f(\bx_k)-\gamma_k \FW(\bx_k) +\frac{1}{2}\gamma_k^2L_f D^2,\label{eq:one-step-f}\\
		&g(\bx_{k+1})\leq (1-\gamma_k) g(\bx_k)+\gamma_k g(\bx_0) +\frac{1}{2}\gamma_k^2 L_g D^2,\label{eq:one-step-g}
	\end{align}
\end{lemma}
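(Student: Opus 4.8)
The plan is to obtain both inequalities from the standard descent lemma applied separately to $f$ and $g$, combined with the update rule and the geometry of the subproblem~\eqref{eq:subproblem-lp}. First I would record the two elementary facts that feed every step: from $\bx_{k+1} = (1-\gamma_k)\bx_k+\gamma_k \bs_k$ we have $\bx_{k+1}-\bx_k = \gamma_k(\bs_k-\bx_k)$, and since $\bx_k\in\cZ$ and $\bs_k\in\cX_k\subseteq\cZ$, the diameter bound in Assumption~\ref{assum:smooth}(i) gives $\|\bx_{k+1}-\bx_k\| = \gamma_k\|\bs_k-\bx_k\|\leq \gamma_k D$.

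For the upper-level bound \eqref{eq:one-step-f}, I would invoke the $L_f$-smoothness of $f$ to write
\[
  f(\bx_{k+1})\leq f(\bx_k)+\gamma_k\fprod{\grad f(\bx_k),\bs_k-\bx_k}+\tfrac{1}{2}\gamma_k^2 L_f D^2 .
\]
The crux is then to show $\fprod{\grad f(\bx_k),\bx_k-\bs_k}\geq \FW(\bx_k)$, and this is exactly where Lemma~\ref{lem:subset} enters. Since $\bs_k$ minimizes $\fprod{\grad f(\bx_k),\cdot}$ over $\cX_k$ and $\cX^*_g\subseteq\cX_k$, every $\bs\in\cX^*_g$ is feasible for the subproblem, so $\fprod{\grad f(\bx_k),\bs_k}\leq\fprod{\grad f(\bx_k),\bs}$ for all $\bs\in\cX^*_g$. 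Taking the maximum over $\cX^*_g$ yields $\fprod{\grad f(\bx_k),\bx_k-\bs_k}\geq \max_{\bs\in\cX^*_g}\fprod{\grad f(\bx_k),\bx_k-\bs}=\FW(\bx_k)$, and substituting (with $\gamma_k\geq 0$) gives \eqref{eq:one-step-f}.

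For the lower-level bound \eqref{eq:one-step-g}, I would apply the $L_g$-smoothness of $g$ in the same fashion to get
\[
  g(\bx_{k+1})\leq g(\bx_k)+\gamma_k\fprod{\grad g(\bx_k),\bs_k-\bx_k}+\tfrac{1}{2}\gamma_k^2 L_g D^2 .
\]
Here the key observation is that $\bs_k\in\cX_k\subseteq\cH_k$, so the defining inequality of the halfspace $\cH_k$ in \eqref{eq:X_k} immediately gives $\fprod{\grad g(\bx_k),\bs_k-\bx_k}\leq g(\bx_0)-g(\bx_k)$. Plugging this in and regrouping $g(\bx_k)+\gamma_k(g(\bx_0)-g(\bx_k)) = (1-\gamma_k)g(\bx_k)+\gamma_k g(\bx_0)$ yields \eqref{eq:one-step-g}.

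The main obstacle, though a modest one, is the Frank--Wolfe gap step: one must recognize that the tractable cutting-plane set $\cX_k$ contains the inaccessible true solution set $\cX^*_g$ (Lemma~\ref{lem:subset}), so that minimizing the linearized upper-level objective over $\cX_k$ still controls the gap $\FW$ defined over $\cX^*_g$. The lower-level estimate, by contrast, follows directly from membership of $\bs_k$ in the halfspace, and all remaining manipulations reduce to the descent lemma, the update rule, and the diameter bound.
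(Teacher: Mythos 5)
Your proposal is correct and follows essentially the same route as the paper's proof: the descent lemma for each of $f$ and $g$ combined with the diameter bound, the inclusion $\cX^*_g\subseteq\cX_k$ from Lemma~\ref{lem:subset} to relate the linear minimization over $\cX_k$ to the Frank--Wolfe gap, and the halfspace inequality defining $\cH_k$ for the lower-level estimate. No gaps.
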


\begin{proof}

Since the gradient of $f$ is $L_f$-Lipschitz and $\cZ$ is bounded with diameter $D$, we have 
\begin{align}
	f(\bx_{k+1}) &\leq f(\bx_k) + \fprod{\nabla f(\bx_k),\bx_{k+1}-\bx_k}+ \frac{1}{2}L_f \|\bx_{k+1}-\bx_k\|^2  \nonumber\\
	&=  f(\bx_k) + \gamma_k\fprod{\nabla f(\bx_k),\bs_{k}-\bx_k}+ \frac{1}{2}L_f\gamma_k^2\|\bs_k-\bx_k\|^2 \nonumber\\
	&\leq  f(\bx_k) + \gamma_k\fprod{\nabla f(\bx_k),\bs_{k}-\bx_k}+ \frac{1}{2}L_f\gamma_k^2D^2. \label{eq:one-step-f-1}
\end{align}
Now using the definition of $\bs_k$ in \eqref{eq:subproblem-lp}, the definition of $\FW(\bx)$ in \eqref{eq:FW_gap} and Lemma~\ref{lem:subset}, we obtain 
\begin{equation}\label{eq:Gamma_upper_bound}
	\fprod{\nabla f(\bx_k),\bs_{k}-\bx_k} = \min_{\bs \in \cX_k}~\fprod{\nabla f(\bx_k),\bs-\bx_k} \leq \min_{\bs \in \cX^*_g}~\fprod{\nabla f(\bx_k),\bs-\bx_k} = -\FW(\bx_k).
\end{equation}
Then \eqref{eq:one-step-f} follows from \eqref{eq:one-step-f-1} and \eqref{eq:Gamma_upper_bound}. 

Similarly, since the gradient of $g$ is $L_g$-Lipschitz, we have 
\begin{align}\label{eq:one-step-g-1}
	g(\bx_{k+1}) &\leq g(\bx_k) + \gamma_k\fprod{\nabla g(\bx_k),\bs_k-\bx_k}+ \frac{1}{2}L_g\gamma_k^2 D^2.
\end{align}
Moreover, since $\bs_k \in \cX_k$, from the definition of $\cX_k$ in \eqref{eq:subproblem-lp} we get $\fprod{\grad g(\bx_k), \bs_k-\bx_k} \leq g(\bx_0)-g(\bx_k)$. Combining this with \eqref{eq:one-step-g-1} leads to \eqref{eq:one-step-g}. 
\end{proof}

\section{PROOF OF THE MAIN THEOREMS}
\subsection{Proof of Theorem~\ref{thm:convex-upper-bound}}\label{appen:convex}
We first prove the convergence rate of the upper-level objective $f$, which largely mirrors the standard analysis of the CG method \citep{jaggi2013revisiting}. Since $\bx^*\in \cX_g^*$ and $f$ is convex, from the definition of $\FW(\bx_k)$ in \eqref{eq:FW_gap} we have 
\begin{align}\label{eq:conv-G}
    \FW(\bx_k) = \max_{\bs\in \cX^*_g}\{\fprod{\grad f(\bx_k),\bx_k-\bs}\} \geq \fprod{\grad f(\bx_k),\bx_k-\bx^*} \geq f(\bx_k)-f^*.
\end{align}
Subtracting $f^*$ from both sides of \eqref{eq:one-step-f} in Lemma~\ref{lem:one-step} and using \eqref{eq:conv-G}, we obtain that 
\begin{align}\label{eq:one_step_contraction}
f(\bx_{k+1})-f^*\leq (1-\gamma_k)(f(\bx_k)-f^*)+\frac{1}{2}\gamma_k^2 L_f D^2. %
\end{align}
Now define $A_k = k(k+1)$. By substituting $\gamma_k=2/(k+2)$ and multiplying both sides of \eqref{eq:one_step_contraction} by $A_{k+1}$, we get  
\begin{equation*}
  A_{k+1}(f(\bx_{k+1})-f^*) \leq A_k(f(\bx_{k})-f^*)+\frac{2(k+1)}{k+2}L_f D^2 \leq A_k(f(\bx_{k})-f^*)+ 2L_f D^2.
\end{equation*}
Hence, if follows from induction that 
\begin{equation*}
  A_K(f(\bx_K)-f^*) \leq A_0(f(\bx_0)-f^*)+ 2KL_f D^2 \quad \Rightarrow \quad  f(\bx_K)-f^* \leq \frac{2KL_f D^2}{A_k} = \frac{2L_f D^2}{K+1}.
\end{equation*}
This completes the first part of the proof. 

The proof for the lower-level problem follows from similar arguments. By subtracting $g(\bx_0)$ from both sides of \eqref{eq:one-step-g} in Lemma~\ref{lem:one-step}, we have 
\begin{equation}\label{eq:one_step_contraction_g}
  g(\bx_{k+1})-g(\bx_0)\leq (1-\gamma_k) (g(\bx_k)-g(\bx_0))+\frac{1}{2}\gamma_k^2 L_g D^2. 
\end{equation}
By substituting $\gamma_k = 2/(k+2)$ and multiplying both sides of \eqref{eq:one_step_contraction_g} by $A_{k+1}$, we obtain
\begin{equation*}
  A_{k+1}(g(\bx_{k+1})-g(\bx_0)) \leq A_k(g(\bx_{k})-g(\bx_0)) + 2L_g D^2.
\end{equation*}
Hence, if follows from induction that 
\begin{equation*}
  A_K(g(\bx_K)-g(\bx_0)) \leq 2KL_g D^2 \quad \Rightarrow \quad  g(\bx_K)-g(\bx_0) \leq \frac{2KL_g D^2}{A_k} = \frac{2L_g D^2}{K+1}.
\end{equation*}
Since $g(\bx_0)-g^*\leq \epsilon_g/2$, 
we obtain
\begin{equation*}
  g(\bx_K)-g^* \leq \frac{2L_g D^2}{K+1} + \frac{1}{2}\epsilon_g,
\end{equation*}
which completes the proof. 

\subsection{Proof of Theorem~\ref{thm:nonconvex-upper-bound}}\label{appen:nonconvex}
Since we use a fixed stepsize in Theorem~\ref{thm:nonconvex-upper-bound}, in the following we will write $\gamma_k=\gamma$. %

We first consider the upper-level objective $f$. The analysis here is similar to the one by \cite{mokhtari2018escape}. By using \eqref{eq:one-step-f} in Lemma~\ref{lem:one-step}, we have 
\begin{equation*}
	\FW(\bx_k) \leq \frac{f(\bx_k)-f(\bx_{k+1})}{\gamma}+\frac{1}{2}\gamma L_f D^2.
\end{equation*}
Summing both sides of the above inequality from $k=0$ to $K-1$, we get
\begin{equation*}
	\sum_{k=0}^{K-1} \FW(\bx_k) \leq \frac{f(\bx_0)-f(\bx_K)}{\gamma} + \frac{1}{2} K\gamma L_f D^2 \leq \frac{f(\bx_0)-\underf}{\gamma} + \frac{1}{2} K\gamma L_f D^2,
\end{equation*}
where we used the fact that $f(\bx_K) \geq \underf = \min_{\bx \in Z} f(\bx)$. This further implies that 
\begin{equation}\label{eq:FW_min}
	\min_{0\leq k \leq K-1}~\FW(\bx_k) \leq \frac{1}{K} \sum_{k=0}^{K-1} \FW(\bx_k) \leq \frac{f(\bx_0)-\underf}{\gamma K} + \frac{1}{2} \gamma L_f D^2.
\end{equation}
To upper bound the right-hand side of \eqref{eq:FW_min}, note that our choices of the stepsize $\gamma$ and the number of iterations $K$ satisfy 
\begin{equation*}
  \gamma \leq \frac{\epsilon_f}{ L_f D^2} \quad \text{and} \quad K \geq \frac{2(f(\bx_0)-\underf)}{\epsilon_f \gamma}.
\end{equation*}
Thus, we have 
\begin{equation*}
	\min_{0\leq k \leq K-1}~\FW(\bx_k) \leq \frac{f(\bx_0)-\underf}{\gamma K} + \frac{1}{2} \gamma L_f D^2 \leq \frac{\epsilon_f}{2} + \frac{\epsilon_f}{2} = \epsilon_f.
\end{equation*}
This guarantees that $\FW(\bx_{k^*}) \leq \epsilon_f$ by choosing $k^* = \argmin_{0\leq k \leq K-1}~\FW(\bx_k)$.

Now we move to the analysis of the lower-level objective $g$. For any $k\geq 0$, by applying induction on \eqref{eq:one-step-g} in Lemma~\ref{lem:one-step}, it follows that 
\begin{equation*}
  g(\bx_k) - g(\bx_0) \leq \frac{1}{2}L_g D^2 \sum_{j=0}^{k-1}\gamma^2 (1-\gamma)^j  \leq \frac{1}{2}L_g D^2 \gamma, 
\end{equation*}
where we used $\sum_{j=0}^{k-1} (1-\gamma)^j \leq 1/\gamma$ in the last inequality. Furthermore, since $g(\bx_0)-g^* \leq \epsilon_g/2$ and $\gamma\leq \frac{\epsilon_g}{L_g D^2}$, this implies that 
$g(\bx_k) - g^* \leq \frac{1}{2} \epsilon_g + \frac{1}{2} \epsilon_g = \epsilon_g$ for any $0\leq k \leq K-1$. In particular, we can take $k=k^*$ and conclude that $g(\bx_{k^*})-g^* \leq \epsilon_g$. This completes the proof. 

\section{PROOFS UNDER H\"OLDERIAN ERROR BOUND ASSUMPTION}\label{appen:error_bound}

\subsection{Proof of Proposition~\ref{prop:error_bound}}
Since $\cX_g^*$ is closed and compact, we can let $\hat{\bx}^* = \argmin_{\bx\in \cX_g^*} \|\bx-\hat{\bx}\|$ such that $\|\hat{\bx}^*-\hat{\bx}\|= \mathrm{dist}(\hat{\bx}, \cX^*_g)$. By Assumption~\ref{assum:Holder}, we obtain 
\begin{equation*}
  \frac{\alpha}{r} \|\hat{\bx}^*-\hat{\bx}\|^r \leq g(\hat{\bx})-g^* \leq \epsilon_g \quad \Leftrightarrow \quad \|\hat{\bx}^*-\hat{\bx}\| \leq \left(\frac{r\epsilon_g}{\alpha}\right)^{\frac{1}{r}}.
\end{equation*}
When $f$ is convex, we have 
\begin{equation*}
  f(\hat{\bx}) -f^* \geq f(\hat{\bx}) -f(\hat{\bx}^*) \geq \langle \grad f(\hat{\bx}^*), \hat{\bx}- \hat{\bx}^* \rangle \geq -\|\grad f(\hat{\bx}^*)\|_* \|\hat{\bx}- \hat{\bx}^*\| \geq -M \left(\frac{r\epsilon_g}{\alpha}\right)^{\frac{1}{r}},
\end{equation*}
where we used the convexity of $f$ in the second inequality. 
When $f$ is non-convex, we have 
\begin{align}
  \FW(\hat{\bx}) = \max_{\bs\in \cX^*_g}\{\fprod{\grad f(\hat{\bx}),\hat{\bx}-\bs}\} &\geq  \fprod{\grad f(\hat{\bx}),\hat{\bx}-\hat{\bx}^*} \nonumber\\
  & = \fprod{\grad f(\hat{\bx})- \grad f(\hat{\bx}^*), \hat{\bx}-\hat{\bx}^*} + \fprod{\grad f(\hat{\bx}^*), \hat{\bx}-\hat{\bx}^*} \nonumber\\
  & \geq -\|\grad f(\hat{\bx})- \grad f(\hat{\bx}^*)\|_*\|\hat{\bx}-\hat{\bx}^*\| - \|\grad f(\hat{\bx}^*)\|\|\hat{\bx}-\hat{\bx}^*\| \nonumber\\
  & \geq -L_f\|\hat{\bx}-\hat{\bx}^*\|^2- M\|\hat{\bx}-\hat{\bx}^*\| \label{eq:using_smooth}\\
  & \geq  -M \left(\frac{r\epsilon_g}{\alpha}\right)^{\frac{1}{r}}- L_f\left(\frac{r\epsilon_g}{\alpha}\right)^{\frac{2}{r}},\nonumber
\end{align}
where we used the fact that $\nabla f$ is $L_f$-Lipschitz in \eqref{eq:using_smooth}. 
This completes the proof. 

\subsection{Proof of Corollary~\ref{coro:stronger_complexity}}
In the first case where $f$ is convex, we set $\epsilon_g=\frac{\alpha}{r}  \left(\frac{\epsilon_f}{M}\right)^r$. By Theorem~\ref{thm:convex-upper-bound}, we have $f(\bx_K)-f^* \leq \epsilon_f$ and $g(\bx_K)-g^* \leq \epsilon_g$ when 
\begin{equation*}
  K \geq \max\biggl\{\frac{2L_f D^2}{\epsilon_f}-1,\frac{4L_g D^2}{\epsilon_g}-1\biggr\} = 
  \max\biggl\{\frac{2L_f D^2}{\epsilon_f}-1,\frac{4r M^r L_g D^2}{\alpha \epsilon_f^{r}}-1\biggr\}= \mathcal{O}\biggl(\frac{1}{\epsilon_f^r}\biggr).
\end{equation*}
Moreover, Proposition~\ref{prop:error_bound} implies that $f(\bx_K)-f^* \geq -M \left(\frac{r\epsilon_g}{\alpha}\right)^{\frac{1}{r}}\geq -\epsilon_f$. Putting all pieces together, we conclude that $|f(\bx_K)-f^*|\leq \epsilon_f$ and $g(\bx_K)-g^* \leq \epsilon_g$ after $K = \bigO({1}/{\epsilon_f^r})$ iterations. 

In the second case where $f$ is non-convex, we set $\epsilon_g=\min\{\frac{\alpha}{r}  \left(\frac{\epsilon_f}{2M}\right)^r, \frac{\alpha}{r}\bigl(\frac{ \epsilon_f}{2L_f}\bigr)^{r/2}\}$. By Theorem~\ref{thm:nonconvex-upper-bound}, we can find $k^*\in \{0,1,\dots,K-1\}$ such that $
\FW (\bx_{k^*})\leq \epsilon_f
$ and $g(\bx_{k^*})-g^*\leq \epsilon_g$ when 
\begin{align*}
  K &\geq (f(\bx_0)-\underf)\cdot \max\biggl\{\frac{2L_fD^2}{\epsilon_f^2},\frac{2L_g D^2 }{\epsilon_f\epsilon_g }\biggr\}\\
  &= (f(\bx_0)-\underf)\cdot\max\biggl\{\frac{2L_fD^2 }{\epsilon_f^2},\frac{2r(2M)^rL_g D^2 }{\alpha \epsilon_f^{r+1} },\frac{2r(2L_f)^{\frac{r}{2}}L_g D^2 }{\alpha \epsilon_f^{\frac{r}{2}+1}}\biggr\} = \bigO\biggl(\frac{1}{\epsilon_f^{r+1}}\biggr).
\end{align*}
Moreover, Proposition~\ref{prop:error_bound} implies that $\FW(\bx_{k^*})\geq -M \left(\frac{r\epsilon_g}{\alpha}\right)^{\frac{1}{r}}- L_f\left(\frac{r\epsilon_g}{\alpha}\right)^{\frac{2}{r}} \geq -\frac{\epsilon_f}{2}-\frac{\epsilon_f}{2} = -\epsilon_f$. Thus, we conclude $|\FW(\bx_{k^*})|\leq \epsilon_f$ and $g(\bx_{k^*})-g^* \leq \epsilon_g$ after $K = \bigO({1}/{\epsilon_f^{r+1}})$ iterations. 

\section{PRIMAL-DUAL METHOD FOR THE BILEVEL PROBLEM}\label{appen:primal-dual}
In this section, we discuss the convergence rate of primal-dual type methods for solving the bilevel problem in \eqref{eq:bi-simp}. 
{We consider the setting as in Theorem~\ref{thm:convex-upper-bound}, in which both $f$ and $g$ are convex and smooth.} 
To simplify the discussion, we further assume $\cZ=\{\bz\in\cX\mid \bA\bz\leq \bb\}$ where $\bA\in\reals^{m\times d}$, $\bb\in\reals^m$, and $\cX$ is a convex and easy-to-project compact set.

To obtain the reformulation in \eqref{eq:constrained_formulation}, one first needs to estimate the optimal value $g^*$ of the lower-level problem. %
Since it is a convex program with linear constraints, we can implement a first-order primal-dual method (see, e.g., \citep{chambolle2016ergodic}) to find $g_0$ such that $\abs{g_0-g^*}\leq \epsilon_g/4$ within at most $\cO(\frac{L_g+\norm{\bA}}{\epsilon_g})$ iterations\footnote{Note that this complexity can be improved to {the optimal rate of} $\cO({\sqrt{\frac{L_g}{\epsilon_g}}+\frac{\norm{\bA}}{\epsilon_g}})$ using an accelerated method. %
}. Next, problem \eqref{eq:bi-simp} can be cast as the following convex optimization problem with linear and nonlinear convex constraints:
\begin{equation}\label{eq:constrained_relaxed}
  \min_{\bx\in \cX}~f(\bx)\quad \hbox{s.t.}\quad  \bA\bx\leq \bb,\,g(\bx) \leq g_0+{\frac{\epsilon_g}{2}},
\end{equation}
{where we add the term $\frac{\epsilon_g}{2}$ to ensure that the Slater's condition holds}. 
Now we can apply any classic or accelerated first-order primal-dual methods \citep{he2015mirror,xu2021iteration,hamedani2021primal} to find a solution of problem~\eqref{eq:constrained_relaxed} that is both $\epsilon_f$-suboptimal and $\frac{\epsilon_g}{4}$-infeasible. For example, the optimal convergence rates obtained by \cite{xu2021iteration} and \cite{hamedani2021primal}
imply that after $K$ iterations, the average iterate $\bar{\bx}_K$ satisfies 
\begin{equation*}
  \max\left\{\abs{f(\bar \bx_K)-f(\bx^*_\epsilon)},\abs{g(\bar\bx_K)-g(\bx^*_\epsilon)}\right\}\leq \Delta/K,
\end{equation*}
where $\bx^*_\epsilon$ denotes an optimal solution of problem \eqref{eq:constrained_relaxed}, $\Delta\triangleq \cO((L_f+L_g+C_g)D^2+C_g\abs{\lambda_1^*}^2+\norm{\bA}\norm{\blambda_2^*}^2)$, {$C_g$ is the Lipschtiz constant of $g$}, and $\lambda_1^*\in\reals$ and $\blambda_2^*\in\reals^{m}$ denote an arbitrary dual optimal solution corresponding to the nonlinear and linear constraints in problem~\eqref{eq:constrained_relaxed}, respectively. 
Using the fact that $f(\bx^*_\epsilon) \leq f(\bx^*)$ and $g(\bx^*_\epsilon)\leq g_0+\frac{\epsilon_g}{2}\leq g^*+\frac{3}{4}\epsilon_g$, we conclude
\begin{align*}
    f(\bar \bx_K)-f(\bx^*)\leq \Delta/K\quad \text{and} \quad%
    \abs{g(\bar\bx_K)-g(\bx^*)}\leq \Delta/K+\frac{3}{4}\epsilon_g. %
\end{align*}
Therefore, to achieve an $(\epsilon_f,\epsilon_g)$-optimal solution of problem~\eqref{eq:bi-simp}, %
a primal-dual method overall requires $\cO\left(\frac{L_g+\norm{\bA}}{\epsilon_g}+\frac{\Delta}{\min\{\epsilon_f,\epsilon_g\}}\right)$ primal-dual gradient calls, whereas our proposed method overall requires $\cO\left(\frac{L_g}{\epsilon_g}+\frac{(L_f+L_g)D^2}{\min\{\epsilon_f,\epsilon_g\}}\right)$ linear minimization oracle calls. 
{In particular, we observe that the convergence guarantee of primal-dual methods heavily relies on the norm of the dual optimal variable $|\lambda_1^*|$, which may tend to infinity as $\epsilon$ approaches zero and the problem in \eqref{eq:constrained_relaxed} becomes nearly degenerate.}

\vspace{-1mm}
\subsection{Numerical Example}\label{subsec:toy}
\vspace{-1mm}

Here we consider a simple two-dimensional example to illustrate the numerical instability of primal-dual methods applied to the relaxed problem \eqref{eq:constrained_relaxed}. To this end, consider the following problem
\begin{align}\label{eq:toy}
    \min_{\bx\in\reals^2}0.5x_1^2-0.5x_1+0.1x_2\quad \hbox{s.t.} \quad \bx\in\argmin_{\bz\in\cZ}\{-z_1-z_2\},
\end{align}
where $\cZ=\{\bz\in\reals^2_+\mid z_1+z_2\leq 1, 4z_1+6z_2\leq 5\}$. 
The lower-level problem has multiple solutions which can be described by $\cX^*_g=\{\bx\in\reals^2\mid x_1+x_2=1, x_1\in[0.5,1], x_2\in[0,0.5]\}$ and the optimal solution of \eqref{eq:toy} is $(x_1^*,x_2^*)=(0.6,0.4)$. We implemented 
accelerated primal-dual method with backtracking (APDB) proposed by \cite{hamedani2021primal}, one of the state-of-the-art primal-dual methods, and 
compared it with our proposed method CG-BiO. Figure \ref{fig:toy} illustrates the iteration {trajectories} of both methods. We selected the relaxing parameter in \eqref{eq:constrained_relaxed} as $\epsilon=10^{-5}$ for APDB. We also used the same accuracy for $\epsilon_g$ and $\epsilon_f$ when implementing CG-BiO. The primal-dual method finds an $\epsilon$-solution (dark red cross) within 193 iterations while CG-BiO finds an $\epsilon$-solution (green star) within 20 iterations. Furthermore, we observe a more stable numerical behavior for CG-BiO in comparison with APDB, which corroborates our theoretical analysis above. %

\begin{figure*}
    \centering
    \subfloat[Lower-level gap]{\includegraphics[scale=0.25]{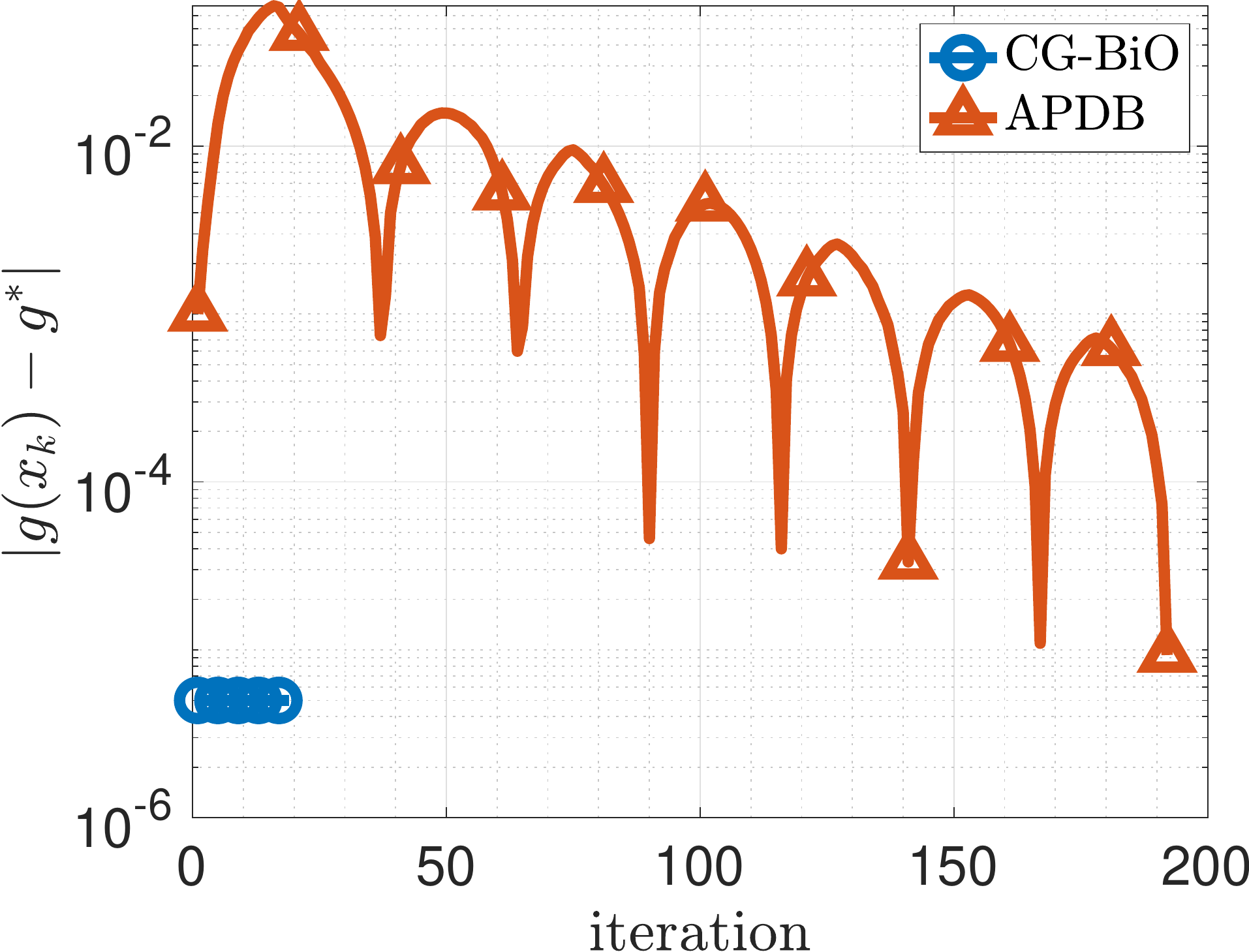}}
    \quad
    \subfloat[Upper-level gap]{\includegraphics[scale=0.25]{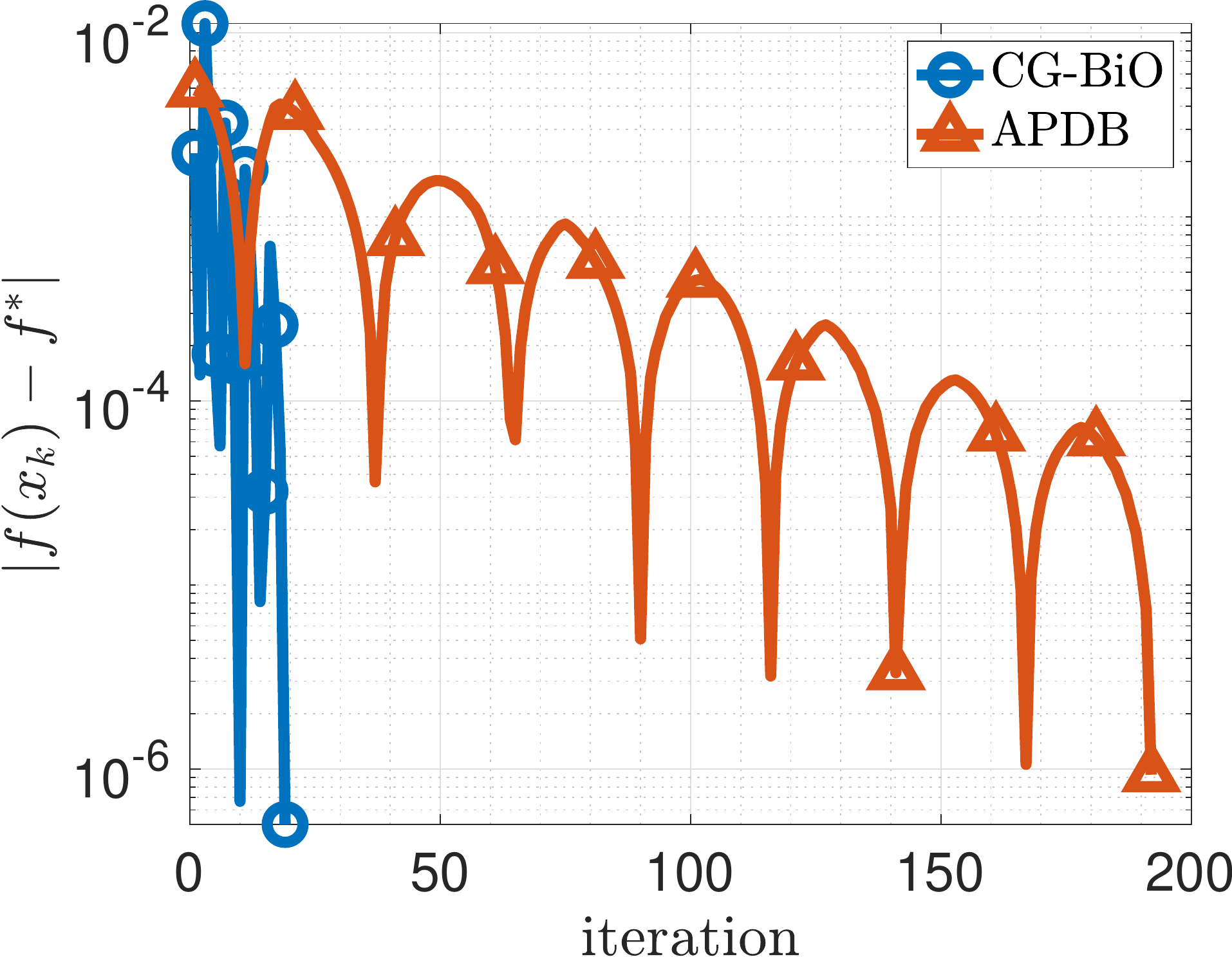}}
    \quad
    \subfloat[Iteration trajectory]{\includegraphics[scale=0.25]{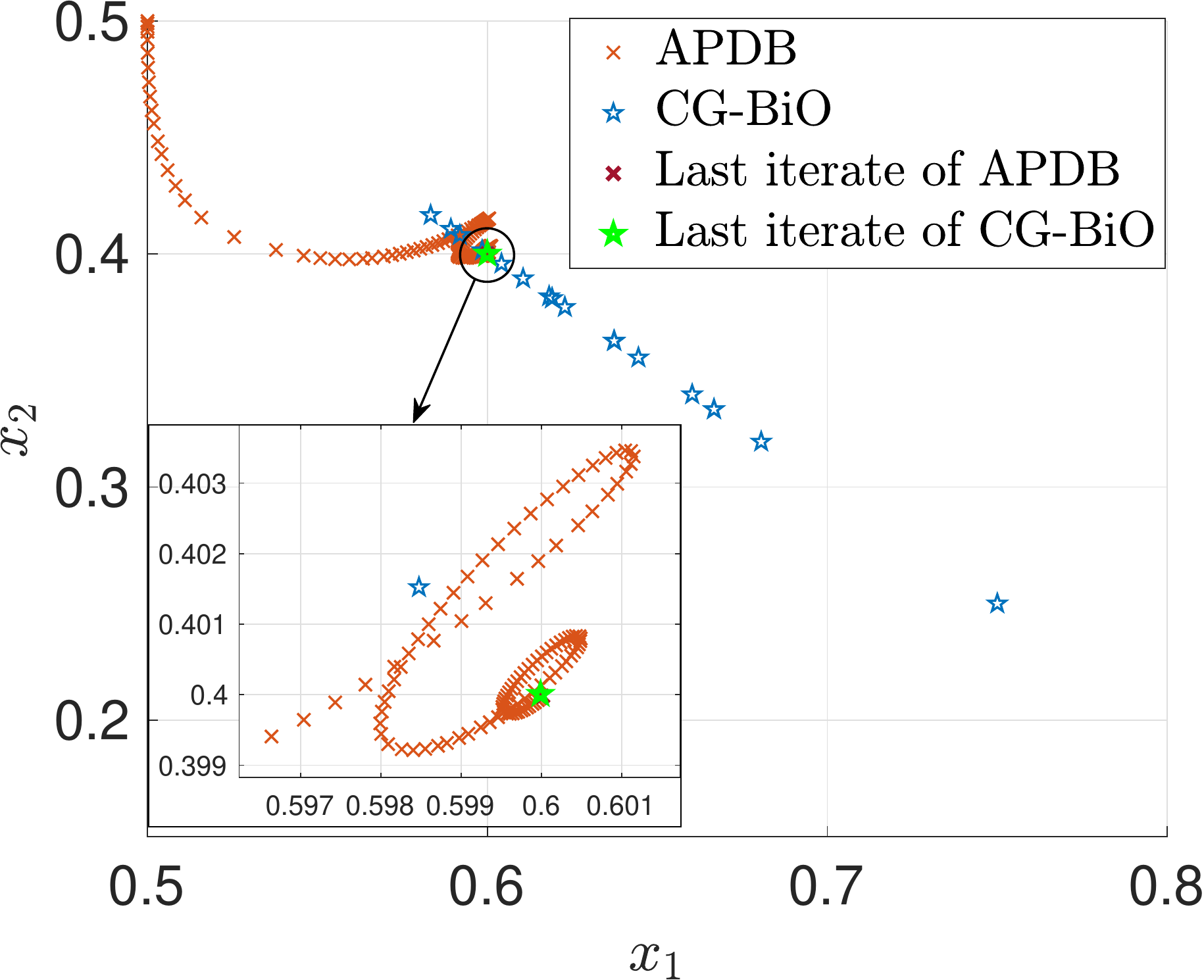}}
    \caption{The performance of CG-BiO compared with APDB on problem \eqref{eq:toy}.}
    \label{fig:toy}
\end{figure*}

\section{ADDITIONAL MOTIVATING EXAMPLES}\label{sec:additional_example}
In this section,
we provide some additional remarks and two more examples for the bilevel problem in \eqref{eq:bi-simp}. 

\subsection{{Lexicographic Optimization}}
In Section~\ref{subsec:examples}, we have seen two instances of lexicographic optimization, where we use the secondary loss to improve generalization (Example~\ref{ex:over_regression}) or promote fairness (Example~\ref{ex:fair_class}). In the following, we describe another standard example where we use regularization to tackle ill-posed problems. 

\begin{example}[Ill-posed Optimization] 
Without an explicit regularization, the empirical risk minimization problem $\min_{\bz\in \mathcal{Z}} \ell_{\mathrm{tr}}(\bz)$ can be ill-posed, i.e., it has multiple optimal solutions or is sensitive to small perturbation in the input data.
To tackle this issue, we can consider 
a secondary objective function $\cR(\cdot)$ as another criterion to select one of the optimal solutions with some desired property. 
For example, 
we can find the minimal $\ell_2$-norm solution by choosing $\cR(\bx)=\frac{1}{2}\norm{\bx}_2^2$. 
Such a problem can be formulated as the following bilevel problem:
\begin{equation*}
    \min_{\bbeta\in \reals^d}~\cR(\bbeta)\quad \hbox{s.t.}\quad  \bbeta\in\argmin_{\bz\in \cZ}~\ell_{\mathrm{tr}}(\bz).
\end{equation*} 
\end{example}

\subsection{Lifelong Learning}

{
A popular framework known as A-GEM~\citep{chaudhry2018efficient} formulates the lifelong learning problem as follows: 
\begin{equation}\label{eq:aGEM}
\min_{\bbeta}~\frac{1}{n'}\sum_{i=1}^{n'}\ell( \langle \bx'_i, \bbeta \rangle,y'_i)\quad \hbox{s.t.}\quad  \sum_{(\bx_i,y_i)\in \mathcal{M}}\ell( \langle \bx_i, \bbeta \rangle,y_i) \leq \sum_{(\bx_i,y_i)\in \mathcal{M}}\ell( \langle \bx_i, \bbeta^{(t-1)} \rangle,y_i). 
\end{equation}
Here, the objective function is the training loss on the current task $\mathcal{D}_t=\{(\bx'_i,y_i')\}_{i=1}^{n'}$, and the inequality constraint ensures that the model with parameter $\bbeta$ performs no worse than the previous one on the episodic memory $\mathcal{M}$ (i.e., some stored data samples from the previous tasks). 
}

{
In this paper, we consider a variant of problem \eqref{eq:aGEM}, where we further tighten the constraint and require that the model also minimizes the error on the episodic memory.
This leads to the following bilevel problem: 
\begin{equation}\label{eq:agem-bilevel}
    \min_{\bbeta}~\frac{1}{n'}\sum_{i=1}^{n'}\ell( \langle \bx'_i, \bbeta \rangle,y'_i)\quad \hbox{s.t.}\quad  \bbeta \in \argmin_{\bz}~\sum_{(\bx_i,y_i)\in \mathcal{M}}\ell( \langle \bx_i, \bz \rangle,y_i).
\end{equation}
Example~\ref{ex:dict} can be viewed as an instance of problem~\eqref{eq:agem-bilevel} where the learning problem at hand is dictionary learning. Below we present another related example from representation learning. 
}

\begin{example}[Representation Learning] In meta-learning problems, we aim to pre-train a model that can be easily fine-tuned to new tasks. This can be often achieved by learning a compact representation that is shared among multiple tasks~\citep{tripuraneni2021provable,du2021fewshot,collins2022maml}. 
In particular, consider a multi-task linear representation learning problem with $T$ tasks at the training time. We assume that the data points for the $i$-th task are generated according to $y_i^j= {\bw_i^*}^\top {\bB^*}^\top \bx_i^j+n_i^j$ for $j=1,\dots,m_i$, where $n_i^j$ is some random noise and $\bB^*\in \reals^{k\times d}$ is a common representation that maps the input in $\mathbb{R}^d$ to a lower dimensional feature vector in $\mathbb{R}^k$. When we have access to a diverse set of tasks such that their heads $\{\bw_i^*\}_{i=1}^T$ span $\mathbb{R}^k$, it is shown that one can find the ground truth representation  $\bB^*$ by solving the following problem: %
  $$
  \min_{\bB}\min_{\bw_1,\dots,\bw_T}\sum_{i=1}^T  \sum_{j=1}^{m_i} \left( y_i^{j}\!-\! \bw_i^{ \top} \bB^{ \top} \bx_i^{j}\right)^2 \qquad \hbox{s.t.}\quad \|\bB\|_F\leq \Delta,\; \|\bw_i\|_1\leq \delta,\;i=1,\dots,T,
  $$
  where we impose the norm constraints on $\bB$ and $\{\bw_i\}_{i=1}^T$ for some parameters $\Delta,\delta>0$ to resolve the scale invariance of the problem. 

  However, if the tasks at the training time are not diverse enough, then we can only learn a partial represention, i.e., a subset of the feature maps in $\bB^*$. 
  One way to further improve the learned representation is to leverage the new tasks we observe during the test time. 
  Concretely, let $\hat{\bw}_1^*,\dots, \hat{\bw}_T^*$ and $\hat{\bB}_{\mathrm{tr}}^*$ denote the output of the training procedure.
  When we are given a new task at the test time, we can improve the representation $\hat{\bB}_{\mathrm{tr}}^*$ by solving the following bilevel problem:
  \begin{equation}\label{eq:representation_bilevel}
     \min_{\bB\in \reals^{k\times d}}\min_{\bw_{T+1}\in \reals^k}  f(\bB,\bw_{T+1})
     \qquad\text{s.t.} \quad \bB\in  \argmin_{\|\bB'\|_F\leq \Delta} g(\bB),\;\|\bw_{T+1}\|_1\leq \delta,
  \end{equation}
  where $f(\bB,\bw_{T+1})\triangleq \sum_{j=1}^{m_{T+1}} \bigl( y_{T+1}^{j}\!-\! \bw_{T+1}^{ \top} \bB^{ \top} \bx_{T+1}^{j}\bigr)^2$ is the loss over the test set and $g(\bB) \triangleq \sum_{i=1}^T  \sum_{j=1}^{m_i} \bigl( y_i^{j}\!-\! {\hat{\bw}_i}^{*\top}{\bB}^{ '\top} \bx_i^{j}\bigr)^2$ is the loss over the training set. 
  The rationale is that 
  the solution of problem~\eqref{eq:representation_bilevel} can fit to both the old training tasks and the new test task, and hence is a better approximation of $\bB^*$ compared to $\hat{\bB}_{\mathrm{tr}}^*$. This way, 
  we maintain the feature maps learned at the training time and at the same time learn new feature maps from the test task. 
  Note that in problem~\eqref{eq:representation_bilevel} the upper-level function is nonconvex, while the lower-level problem is convex with multiple solutions. 
  \end{example}

\section{EXPERIMENT DETAILS}\label{appen:experiment}
In this section, we include more details of the numerical experiments in Section~\ref{sec:numeric}. 

For completeness, we briefly review the update rules of MNG~\citep{beck2014first}, BiG-SAM~\citep{sabach2017first}, a-IRG~\citep{Kaushik2021} \blue{and DBGD~\citep{gong2021automatic}} in the setup of problem~\eqref{eq:bi-simp}. In the following, we use $\Pi_{\cZ}(\cdot)$ to denote the Euclidean projection onto the set $\cZ$.

\begin{itemize}
\item {
Each step of MNG requires solving the following subproblem: 
\begin{equation}\label{eq:MNG}
    \bx_{k+1} = \argmin_{\bx\in Q_{k} \cap W_{k}} f(\bx),
\end{equation}
where 
\begin{align*}
    Q_k &\triangleq \left\{\bz\in \reals^d\mid \langle G_M(\bx_k), \bx_k-\bz \rangle \geq \frac{3}{4M}\|G_M(\bx_{k})\|^2\right\}, \\
    W_k &\triangleq \left\{\bz\in \reals^d\mid \langle \nabla f(\bx_k),\bz-\bx_k \rangle \geq 0 \right\}, \\
    G_M(\bx) &\triangleq M\left[\bx-\Pi_{\mathcal{Z}}\left(\bx-\frac{1}{M}\nabla g(\bx)\right)\right],
\end{align*}
and $M \geq L_g$ is a hyperparameter. As we can see, the implementation of MNG is only feasible when the subproblem in \eqref{eq:MNG} is easy to solve. In particular, it is typically computationally intractable when the upper-level objective $f$ is non-convex. 
}
\item
BiG-SAM is given by 
\begin{align*}
    \by_{k+1} &= \Pi_{\mathcal{Z}}(\bx_k-\eta_g \grad g(\bx_k)), \\
    \bz_{k+1} &= \bx_k - \eta_f\grad f(\bx_k), \\
    \bx_{k+1} &= \alpha_{k+1} \bz_{k+1} + (1-\alpha_{k+1}) \by_{k+1},
\end{align*}
where $\eta_f\leq \frac{2}{L_f}$ and $\eta_g \leq \frac{1}{L_g}$ are stepsizes and $\alpha_k = \min\{\frac{\gamma}{k},1\}$ for some $\gamma>0$.
{We note that the analysis by \cite{sabach2017first} requires the upper-level objective to be strongly convex, and therefore is not directly applicable in our setting. Nevertheless, we also implement their method and manually set the hyperparameters.}

\item
The a-IRG algorithm is given by
\begin{equation*}
    \bx_{k+1} = \Pi_{\mathcal{Z}}\left( \bx_k - \gamma_k(\grad g(\bx_k)+\eta_k \grad f(\bx_k))\right),
\end{equation*}
where $\gamma_k$ is the stepsize and $\eta_k$ is the regularization parameter. In our experiment, we choose $\gamma_k =  \gamma_0/\sqrt{k+1}$ and $\eta_k=\eta_0/(k+1)^{1/4}$ for some constants $\gamma_0$ and $\eta_0$. 

\item \blue{The DBGD algorithm is given by 
\begin{equation}\label{eq:DBGD_unconstrained}
    \bx_{k+1} = \bx_k - \gamma_k(\grad f (\bx_k)+\lambda_k \grad g (\bx_k)),
\end{equation}
where $\gamma_k$ is the stepsize and we set $\lambda_k$ as 
\begin{equation*}
    \lambda_k = \max\left\{\frac{\phi(\bx_k)-\langle \grad f(\bx_k), \grad g(\bx_k) \rangle}{\|\grad g(\bx_k)\|^2},0\right\} \quad \text{and} \quad \phi(\bx) = \min\{\alpha (g(\bx)-\hat{g}), \beta \|\grad g (\bx)\|^2\}.
\end{equation*}
Here, $\alpha$ and $\beta$ are hyperparameters and $\hat{g}$ is a lower bound on $g^*$. In our experiment, we choose $\alpha=\beta=1$ and $\hat{g}=0$. We also note that \cite{gong2021automatic} only considered unconstrained simple bilevel problems where $\mathcal{Z}=\reals^d$. To enforce the constraint, we replace \eqref{eq:DBGD_unconstrained} with the update rule $\bx_{k+1} = \Pi_{\mathcal{Z}}(\bx_k - \gamma_k(\grad f (\bx_k)+\lambda_k \grad g (\bx_k)))$. }
\end{itemize}

\subsection{Over-parameterized Regression}\label{appen:overparameterized}

\noindent\textbf{Dataset Generation.} The original Wikipedia Math Essential dataset~\citep{rozemberczki2021pytorch} consists of an 1068$\times$731 matrix. We randomly select one of the columns as the outcome vector $\bb\in \reals^{1068}$ and the rest as the data matrix $\bA\in \reals^{1068\times 730}$. We let $\lambda=1$ in the experiment, i.e., the constraint set is given by $\cZ=\{\bbeta\mid \|\bbeta\|_1 \leq 1\}$. 

\noindent\textbf{Initialization.} We run the standard CG method with the stepsizes chosen as $2/(k+2)$ on the lower-level problem in \eqref{eq:over_regression}. We terminate the procedure once the FW gap is no more than $\epsilon_g/2 = 5\times 10^{-5}$ or we have reached the maximum number of iterations $N_\mathrm{max}=10^4$.  

\noindent\textbf{Implementation Details.} 
For our CG-BiO method, we set the target accuracies for the upper-level and lower-level problems to {$\epsilon_f = 10^{-4}$} and $\epsilon_g = 10^{-4}$, respectively. We choose the stepsizes as $\gamma_k = 2/(k+12)$ to avoid instability due to large initial stepsizes. 
{In each iteration, we need to solve a subproblem in the form of
\begin{equation}\label{eq:LP}
    \min_{\bs}~\langle \grad f(\bbeta_k),\bs\rangle\qquad\text{s.t.} \quad \|\bs\|_1\leq \lambda,~\langle \grad g(\bbeta_k),\bs-\bbeta_k \rangle\leq  g(\bbeta_0)-g(\bbeta_k).
\end{equation}
We can reformulate the above problem as a linear program by introducing $\bs^{+},\bs^{-}\geq 0$ such that $\bs = \bs^{+}-\bs^{-}$. Specifically, problem~\eqref{eq:LP} becomes
\begin{align*}
    \min_{\bs^{+},\bs^{-}}~&\langle \grad f(\bbeta_k),\bs^{+}-\bs^{-}\rangle\\
    \text{s.t.} \quad &\bs^{+},\bs^{-}\geq 0,~\langle \bs^{+}, \mathbf{1}\rangle+\langle \bs^{-}, \mathbf{1}\rangle \leq \lambda,~\langle \grad g(\bbeta_k),\bs^{+}-\bs^{-}-\bbeta_k \rangle\leq  g(\bbeta_0)-g(\bbeta_k),
\end{align*}
where $\mathbf{1}\in \reals^d$ is the all-one vector. 

For MNG, we set $M=\lambda_{\mathrm{max}}(\bA_{\mathrm{tr}}^\top \bA_{\mathrm{tr}})$. For BiG-SAM, we set $\eta_f=2/\lambda_{\mathrm{max}}(\bA_{\mathrm{val}}^\top \bA_{\mathrm{val}})$, $\eta_g = 1/\lambda_{\mathrm{max}}(\bA_{\mathrm{tr}}^\top \bA_{\mathrm{tr}})$ and $\gamma=10$. For a-IRG, we set $\gamma_0=0.01$ and $\eta_0 = 1$. \blue{For DBGD, we set $\gamma_k = 10^{-4}$.}
}

\subsection{Fair Classification}
\noindent\textbf{Dataset Generation.} We preprocess the original Adult income dataset~\citep{Dua_2019} with the same procedure as in \citep{zafar2017fairness}, leading to a dataset with 50 attributes for prediction.  Moreover, we standardize all the attributes such that they lie between 0 and 1. In our experiment, we set $\lambda=100$.

\noindent\textbf{Initialization.} We run the standard CG method with backtracking line search~\citep{pedregosa20a} on the sparse logistic regression problem in \eqref{eq:fair_lower_level}. We terminate the procedure once the FW gap is no more than $\epsilon_g/2 = 5\times 10^{-5}$ or we have reached the maximum number of iterations $N_\mathrm{max}=10^4$.

\noindent\textbf{Implementation Details.}
For our CG-BiO method, we set {$\epsilon_f = 10^{-4}$} and $\epsilon_g = 10^{-4}$, respectively. We choose the stepsize as $\gamma_k = 0.005/\sqrt{k+1}$ instead of a constant stepsize as suggested by Theorem~\ref{thm:nonconvex-upper-bound}. Empirically, we observe that this leads to faster convergence. The subproblem we need to solve is in the same form as problem~\eqref{eq:LP}, which is also solved by a LP solver. 

For BiG-SAM, we set $\eta_f=\eta_g=0.1$ and $\gamma =1$. For a-IRG, we set $\gamma_0=5$ and $\eta_0=0.1$. \blue{For DBGD, we set $\gamma_k = 0.08$.}

\subsection{Dictionary Learning}\label{appen:dict_learning}
\noindent\textbf{Dataset Generation.}
Each of the sparse coefficient vectors $\{\bx_i\}_{i=1}^{250}$ and $\{\bx_k'\}_{k=1}^{200}$ has 5 nonzero entries, whose locations are randomly chosen. Also, the absolute values of those nonzero weights are drawn uniformly from the interval $[0.2,1]$. The entries of the random noise vectors $\{\bn_i\}_{i=1}^{250}$ and $\{\bn'_k\}_{k=1}^{200}$ follow i.i.d.\ Gaussian distribution with mean 0 and standard deviation 0.01.

\noindent\textbf{Initialization.} %
The initialization consists of two phases. In the first phase, we run the standard CG algorithm on both the variables $\bD\in \reals^{25\times 40}$ and $\bX \in \reals^{40\times 250}$ for $10^4$ iterations with the stepsize chosen as $1/\sqrt{k+1}$ ($k\geq 0$ is the iteration counter). Then in the second phase, we keep the variable $\bX$ fixed and update $\bD$ using the standard CG algorithm with exact line search. 
We terminate the procedure and output $\hat{\bD}$ and $\hat{\bX}$ until the FW gap with respect to $\bD$ is no more than $\epsilon_g=10^{-6}$. 

\noindent\textbf{Implementation Details.} We choose $\delta = 3$ in both problems~\eqref{eq:dict_learning_low} and~\eqref{eq:dict_learning}. 
All three algorithms start from the same initial point. We initialize $\tilde{\bD}\in \reals^{25\times 50}$ as the concatenation of $\hat{\bD}\in \reals^{25\times 40}$ and 10 columns of all zeros. Moreover, we initialize the variable $\tilde{\bX}$ randomly by drawing its entries from a standard Gaussian distribution and 
normalizing each column to have a $\ell_1$-norm of $\delta$. 
For our CG-BiO method, we choose the stepsize as $\gamma_k = 0.3/\sqrt{k+1}$ instead of a constant stepsize as suggested by Theorem~\ref{thm:nonconvex-upper-bound}. Empirically, we observe that this leads to faster convergence. The same stepsize rule is also used in the baseline CG method. %
{In each iteration, we need to solve a subproblem in the form of
\begin{equation}\label{eq:dict_sub}
    \min_{\tilde{\bD}}~\langle \grad f_{\tilde{\bD}}(\tilde{\bD}_k,\tilde{\bX}_k),\tilde{\bD}\rangle\qquad\text{s.t.} \quad \|\tilde{\bd}_i\|_2\leq 1,~\langle \grad g(\tilde{\bD}_k),\tilde{\bD}-\tilde{\bD}_k \rangle\leq  g(\tilde{\bD}_0)-g(\tilde{\bD}_k).
\end{equation}
By using the KKT condition, it can be shown that the above problem is equivalent to finding a zero of the following one-dimensional nonlinear equation involving $\lambda \geq 0$: 
\begin{equation*}
   \tilde{\bD} =  \Pi_{\mathcal{Z}}(\grad f_{\tilde{\bD}}(\tilde{\bD}_k,\tilde{\bX}_k)+\lambda \grad g(\tilde{\bD}_k)), \quad \langle \grad g(\tilde{\bD}_k),\tilde{\bD}-\tilde{\bD}_k \rangle=  g(\tilde{\bD}_0)-g(\tilde{\bD}_k),
\end{equation*}
where the projection on $\mathcal{Z}=\{\tilde{\bD}\in \reals^{25\times 40}:\|\tilde{\bd}_i\|_2\leq 1,\;i=1,\dots,40\}$ amounts to a column-wise projection on the Euclidean ball. In practice, we find that it can be solved efficiently by MATLAB's root-finding solver.

\blue{For BiG-SAM, we set $\eta_f=\eta_g=0.1$ and $\gamma =10$. For a-IRG, we set $\gamma_0=0.01$ and $\eta_0=1$. For DBGD, we set $\gamma_k = 0.1$.}
}

\end{document}